\documentclass[10pt,a4paper,draft]{article}

\usepackage{amsmath,amsthm,amsopn,amsfonts,amssymb,dsfont,color}
\usepackage[dvips]{graphicx}
\usepackage{psfrag}
\usepackage{enumerate}

\usepackage{a4,a4wide}

\def\ep{\varepsilon}
\def\R{\mathbb R}

\newtheorem{theo}{\textbf{Theorem}}[section]

\newtheorem{lem}[theo]{\textbf{Lemma}}
\newtheorem{prop}[theo]{\textbf{Proposition}}

\newtheorem{defi}[theo]{\textbf{Definition}}

\newtheorem{assumption}[theo]{\textbf{Assumption}}
\newtheorem{rem}[theo]{\textbf{Remark}}

\title{Asymptotic analysis of a monostable equation in periodic media}
\date{}

\begin{document}

\maketitle

\begin{center}
{\large\bf Matthieu Alfaro \footnote{ I3M, Universit\'e de
Montpellier 2, CC051, Place Eug\`ene Bataillon, 34095 Montpellier
Cedex 5, France. E-mail: matthieu.alfaro@univ-montp2.fr}
 and  Thomas Giletti
\footnote{IECL, Universit\'{e} de Lorraine, B.P. 70239, 54506
Vandoeuvre-l\`{e}s-Nancy Cedex, France. E-mail:
thomas.giletti@univ-lorraine.fr}.}\\
[2ex]

\end{center}

%\vspace{15pt}

\tableofcontents

\vspace{10pt}

\begin{abstract} We consider a multidimensional monostable reaction-diffusion equation whose nonlinearity
involves periodic heterogeneity. This serves as a model of
invasion for a population facing spatial heterogeneities.
 As a rescaling parameter tends to zero, we prove the
convergence to a limit interface, whose motion is governed by the
minimal speed (in each direction) of the underlying pulsating
fronts. This dependance of the speed on the (moving) normal
direction is in contrast with the homogeneous case and makes the
analysis quite involved. Key ingredients are the recent
improvement \cite{A-Gil} of the well-known spreading
properties \cite{Wein02}, \cite{Ber-Ham-02}, and the solution of a Hamilton-Jacobi equation. \\

\noindent{\underline{Key Words:} propagating interface, periodic
media, pulsating front, monostable nonlinearity, Hamilton-Jacobi equation, viscosity solution.}\\

\noindent{\underline{AMS Subject Classifications:} 35K57, 35R35,
35F21.}
\end{abstract}

\section{Introduction}\label{s:intro}

We consider the Cauchy problem
\[
 (P^\ep) \quad\begin{cases}
 \partial _t u^\ep= \ep \Delta u^\ep+\displaystyle \frac 1 \ep f\left(\frac x \ep,u^\ep\right) &\text{in }(0,\infty)\times \R ^N  \vspace{5pt}\\
 u^\ep(0,x)=g(x) &\text{in }\R ^N,
 \end{cases}
\]
where $u$ will typically denotes a population density, and the nonlinearity $f(x,u)$ is periodic in $x\in \R^N$ and of
the monostable type. The parameter $\ep>0$ measures the thickness
of the diffuse interfacial layer, which will account for the invasion front of the population. Our goal is to study the
asymptotic behavior --- or the {\it singular limit}, or the {\it
sharp interface limit} --- of $(P^\ep)$ as $\ep \to 0$.

The reaction-diffusion equation in problem $(P^\ep)$ arises from
the hyperbolic space-time rescaling $u^\ep(t,x):=u\left(\frac t
\ep, \frac x \ep\right)$ of the heterogeneous equation
\begin{equation}\label{monostable}
\partial _t u=\Delta
u+ f( x,u).
\end{equation}
Let us emphasize that the understanding of the long time behavior
of \eqref{monostable} is not equivalent to that of the sharp
interface limit of $(P^\ep)$. Roughly speaking, the former one
deals with the stabilization of the interface into a predetermined
shape after a long time, whereas the latter one keeps the memory
of the shape of the initial data. In other words, the singular
limit procedure describes some transient states, during which
geometry is quite relevant.

\medskip
Let us now state the assumptions on the nonlinearity $f(x,u)$. Let
$L_1$,...,$L_N$ be given positive constants. A function $h:\R ^ N
\to \R$ is said to be {\it periodic} if
$$
h(x_1,...,x_k+L_k,...,x_N)=h(x_1,...,x_N),
$$
for all $1\leq k \leq N$, all $(x_1,...,x_N)\in \R^N$. In such
case, $(0,L_1)\times\cdots \times (0,L_N)$ is called the cell of
periodicity. Through this work, we  assume that
\begin{equation}\label{periodicity}
\text{ for all } u\in \R_+,\, f(\cdot,u): \R^N \to \R \text{ is
periodic}.
\end{equation}

Our second main assumption on the nonlinearity $f $ is the
following.

\begin{assumption}[Monostable nonlinearity]\label{hyp:monostable} The function $f:\R ^N \times \R_+\to \R$ is of class $C^{1,\alpha}$ in $(x,u)$
and~$ C^2$ in $u$, and nonnegative on $\R^N\times [0,1]$.
Concerning the steady states of the periodic
equation~\eqref{monostable}, we assume that
\begin{enumerate}[$(i)$]
\item the constants 0 and 1 are steady states (that is,
$f(\cdot,0)\equiv f (\cdot,1) \equiv 0$ in $\R^N$); \item $\forall
u \in (0,1), \ \exists x \in \R^N, \ \  f(x,u) >0$. \item there
exists some $\rho
>0$ such that $f (x,u)$ is nonincreasing with respect to $u$ in
the set $\R^N \times (1-\rho,1]$.
\end{enumerate}
\end{assumption}

Notice that, if $0\leq p(x)\leq 1$ is a periodic stationary state,
then $p\equiv 0$ or $p\equiv 1$. Indeed, since $f(x,p)\geq 0$, the
strong maximum principle enforces $p$ to be identically equal to
its minimum, thus constant and, by $(ii)$, the constant has to be
0 or 1. Hence, under the above hypotheses,
equation~\eqref{monostable} is often referred to as the monostable
equation. Typical examples are of the form $f(x,u) = p(x)
\tilde{f} (u)$, where $p(x)$ is positive and periodic, and
$\tilde{f}$ is a homogeneous nonlinearity possibly of the
following types: $\tilde f_1(u)=u(1-u)$ (Fisher-KPP),
$\tilde{f}_2(u)= u^r (1-u)$ with $r>1$ (weak Allee effect),
$\tilde{f}_3(u) = e^{-1/u} (1-u)$ (Arrhenius nonlinearity), or
$\tilde f _4(u)=u(e^{1-u}-1)$ (Nicholson's blowflies equation).

The monostable problem \eqref{monostable} arises in various fields
of physics and the life sciences, and especially in population
dynamics models where propagation phenomena are involved. Indeed,
a particular feature of this equation is the formation of
traveling fronts, that is particular solutions describing the
transition at a constant speed from one stationary solution to
another one. Such solutions have proved in numerous situations
their utility in describing the dynamics of a population modelled
by a reaction-diffusion equation.

Equation \eqref{monostable} is a heterogeneous version of the
reaction-diffusion equation
\begin{equation}\label{homogene}
\partial _t u=\Delta u +f(u),
\end{equation}
with $f$ of the monostable type. Among monostable nonlinearities,
one can distinguish the ones satisfying the Fisher-KPP assumption,
namely $u\mapsto \frac{f(u)}u$ is maximal at 0, the most famous
example $f(u)=\tilde f _1(u)=u(1-u)$ being introduced by Fisher
\cite{Fish} and Kolmogorov, Petrovsky and Piskunov
\cite{Kol-Pet-Pis} to model the spreading of advantageous genetic
features in a population. The KPP assumption means that the growth
is only slowed down by the intra-specific competition, so that the
growth per capita is maximal at small densities. Due for instance
to the lack of genetic diversity at low density, this assumption
may be unrealistic. To take into account such a  {\it weak Allee
effect}, one may use the growth function $f(u)=\tilde f
_2(u)=u^r(1-u)$, $r>1$. The nonlinearity $f(u)=\tilde f
_4(u)=u(e^{1-u}-1)$ is commonly used \cite{Gur-Bly-Nis} to explain
oscillations of a population of Australian sheep blowflies, {\it
Lucilia Cuprina}, described by Nicholson \cite{Nic}. Let us notice
that our work stands in the class of monostable nonlinearities,
and therefore covers all these examples coming from population
dynamics models, and the Arrhenius case $f(u)=\tilde{f}_3(u) =
e^{-1/u} (1-u)$ which comes from combustion models.

Nevertheless, the environment is rarely homogeneous and may depend
in a non trivial way on the position in space (patches, periodic
media, or more general heterogeneity...), so that one should take
into account heterogeneities. We refer to the seminal book of
Shigesada and Kawasaki \cite{Shi-Kaw}, and the enlightening
introduction in \cite{Ber-Ham-Roq1} where the reader can find very
precise and various references. For example such heterogeneities
are very pertinent in some epidemiology models, where different
treatments (antibiotics or insecticides) are tested, aiming at
finding an optimal combination.

In a periodic framework, traveling fronts in the homogeneous
equation \eqref{homogene} are replaced by the so-called {\it
pulsating} traveling fronts in the periodic equation
\eqref{monostable} (see below for details). As far as the rescaled
equation in $(P^\ep)$ is concerned, fronts become sharper as $\ep
\to 0$, and we therefore have to deal with the so-called
interfaces. Also, as explained above, the singular limit analysis
of \eqref{monostable} describes a transient state where the
geometry of the initial habitat of the population is an insightful
information.

\medskip  In this paper, we aim at looking at the way those
interfaces are generated and propagate, hence providing some
accurate connection between the behavior of solutions $u^\ep(t,x)$
in the fast reaction and low diffusion regime and some free
boundary problem. One of the originality of this work is that we
allow the equation to be spatially heterogeneous, which as recalled above is essential in realistic biological models. More precisely, we restrict
ourselves to the spatially periodic case, which provides
insightful information on the role and influence of the
heterogeneity on the propagation, as well as a slightly more
common mathematical framework.

We will describe in subsection \ref{ss:pulsating-fronts} what is
known as far as front-like solutions of~\eqref{monostable} are
concerned. In particular, we will see that the outcome of the
heterogeneity is some new dynamics, that do not appear in the
homogeneous case, where the speed of the propagation depends on
its direction. This feature is the origin of new technical
difficulties when retrieving the interface motion.

\medskip

As far as initial data $g(x)$ appearing in $(P^\ep)$ are
concerned, we make the following hypotheses.
\begin{assumption}[Structure of initial data]\label{H1}
\begin{itemize}
\item [$(i)$] Let $\Omega_0$ be a nonempty, open and bounded set of
$\R ^N$. Let $\widetilde g:\overline{\Omega_0}\to [0, 1 )$ be a
map of the class $C^2$ on $\overline{\Omega_0}$, positive on
$\Omega _0$ and such that $\widetilde g(x)=0$ for all
$x\in\partial \Omega _0$. Define the map $g:\R^N\to \R$ by
\begin{equation*}
g(x)=\begin{cases} \widetilde g(x)&\text{ if } x\in\overline{\Omega_0}\\
0&\text{ if } x\notin \overline{\Omega_0}\,.\end{cases}
\end{equation*}
\item[$(ii)$] We assume that $\Omega _0$ is convex and has a smooth
boundary $\Gamma _0:=\partial \Omega _0$.
\end{itemize}
\end{assumption}

Notice that the assumption $g(x) < 1$ becomes unnecessary if one
assumes further that there is no steady state for
\eqref{monostable} above $1$. Also, rather than compactly
supported initial data, one may allow $g(x)$ to have tails that
are \lq\lq consistent" with those of the pulsating fronts (see
\cite{A-Duc2} for the homogeneous case with \lq\lq tails"). For
the sake of simplicity, we do not consider here such cases. The
convexity assumption $(ii)$ will allow to describe explicitly the
limit interface (obtained via a Hamilton-Jacobi approach) in
Proposition \ref{prop:motion?} and then to use a family of planar
supersolutions in Section \ref{s:control-above}.

\medskip

Before stating our results, let us now comment on related works.
First, there is a large literature on the singular limit of
(generalizations of)
\begin{equation}\label{souga}
 \partial _t u^\ep= \ep \Delta u^\ep+\displaystyle \frac 1 \ep
 f\left(x,u^\ep\right).
\end{equation}
Observe that \eqref{souga} arises after a hyperbolic rescaling of
$$
\partial _t u=\Delta u+ f(\ep x,u),
$$
whereas Problem $(P^\ep)$ under consideration follows from
\eqref{monostable}. First results are due to Freidlin \cite{Frie,
Frie2} using probabilistic methods. Later, Evans and Souganidis
\cite{Eva-Sou} used PDE technics, Hamilton-Jacobi framework to be
more precise, to study
 \eqref{souga}. In this context, we also
refer to \cite{Bar-Eva-Sou}, \cite{Bar-Sou} and, for an overview,
to \cite{Sou}.   Let us also mention the related work
\cite{Maj-Sou} which is linked with homogenization processes
\cite{Lio-Pap-Var}. As far as (generalizations of) the considered
problem $(P^\ep)$ is concerned, we refer to \cite[Section
9]{Lio-Sou} where Hamilton-Jacobi and homogenization technics are
combined. Nevertheless, notice that all these results hold under
the KPP assumption, that is $f(x,u)\leq f_u(x,0)u$, whereas we
stand in the larger class of monostable nonlinearities.

In the homogeneous case $f(x,u)=f(u)$, the sharp interface limit
of \eqref{souga} has been recently revisited using specific
reaction-diffusion tools, such as the comparison principle and
traveling wave solutions, which allows to capture accurate convergence rates~\cite{A-Duc, A-Duc2}. Hence, the
introduction of a delay effect has been handled in \cite{A-Duc3},
via such methods.

Our analysis of the introduction of heterogeneity in $(P^\ep)$
stands mainly in this latter framework. It relies on accurate
\lq\lq local" subsolutions combined with improved spreading speeds
properties \cite{A-Gil}, and on a family of planar supersolutions
whose envelop solves the limit Hamilton-Jacobi equation.

\section{Some known results}\label{s:known_results}

Before stating our main results in Section \ref{ss:main-results},
we need to say a few words on monostable pulsating fronts and
spreading speeds (in subsection \ref{ss:pulsating-fronts}), and on
the limit free boundary problem $(P^0_{HJ})$ (in subsection
\ref{ss:free-boundary-pbs}), which is expected to describe the
motion of the transition layers of the solutions $u^\ep(t,x)$ of
$(P^\ep)$, as $\ep \to 0$.

\subsection{Monostable pulsating fronts and spreading properties}\label{ss:pulsating-fronts}

The definition of the so-called pulsating traveling wave was
introduced by Xin \cite{Xin} in the framework of flame
propagation. It is the natural extension, in the periodic
framework, of classical traveling waves. Due to the interest of
taking into account the role of the heterogeneity of the medium on
the propagation of solutions, a lot of attention was later drawn
on this subject. As far as monostable pulsating fronts are
concerned, we refer to the seminal works of Weinberger
\cite{Wein02}, Berestycki and Hamel \cite{Ber-Ham-02}. Let us also
mention \cite{Ber-Ham-Roq2}, \cite{Ham}, \cite{Ham-Roq},
\cite{Nad-09} for related results.

\medskip

 For the sake of completeness,
let us first recall the definition of a pulsating traveling wave
for the monostable equation~\eqref{monostable}, as stated in
\cite{Ber-Ham-02}.

\begin{defi}[Pulsating traveling wave]\label{def:puls}
A pulsating traveling wave solution, with speed $c >0$ in the
direction $n \in \mathbb{S}^{N-1}$, is an entire solution $u(t,x)$
--- $t\in\R$, $x\in \R ^N$--- of \eqref{monostable} satisfying
$$
\forall k \in \prod_{i=1}^N L_i \mathbb{Z} , \qquad u(t
,x)=u\left(t + \frac{k\cdot n}{c} ,x+k\right),
$$
for any $t \in \R$ and $x\in \R^N$, along with the asymptotics
$$
u(-\infty,\cdot)=0  < u (\cdot , \cdot) < u(+\infty,\cdot)
=1,
$$
where the convergences in $\pm \infty$ are understood to hold
locally uniformly in the space variable.
\end{defi}

One can easily check that, for any $c>0$ and $n \in
\mathbb{S}^{N-1}$, $u(t,x)$ is a pulsating traveling wave with
speed $c$ in the direction $n$ if and only if it can be written in
the form
$$
u(t,x) = U (x\cdot n -ct,x),
$$
where~$U(z,x)$ --- $z\in \R$, $x\in \R^N$--- satisfies
$$
\text{ for all } z\in \R,\, U(z,\cdot): \R^N \to \R \text{ is
periodic},
$$
\begin{equation*}
 U (-\infty, \cdot) = 1  < U (\cdot , \cdot) <   U (+\infty,\cdot)
 =0 \quad \text{ uniformly w.r.t. the space variable},
\end{equation*}
along with the following equation
\begin{equation}\label{eq-tw}
(\partial_{zz}+ \Delta_x ) U+2\nabla _x \partial _z U \cdot
n+c\partial _z U+f(x,U)=0\quad \text{ on } \R\times \R ^N.
\end{equation}

We can now recall the result of \cite{Ber-Ham-02}, \cite{Wein02}, on existence of pulsating traveling waves for
the spatially periodic monostable equations: in any direction
there is a minimal speed $c^* (n)>0$ which allows existence.
Precisely, the following holds.

\begin{theo}[Monostable pulsating fronts, \cite{Ber-Ham-02}, \cite{Wein02}]
Assume that $f$ is of the spatially periodic monostable type, i.e.
$f$ satisfies \eqref{periodicity} and
Assumption~\ref{hyp:monostable}.

Then for any $n \in \mathbb{S}^{N-1}$, there exists $c^* (n) >0$
such that traveling waves with speed $c$ in the $n$-direction
exist if and only if $c \geq c^* (n)$. Furthermore, any pulsating
traveling wave is increasing in time.
\end{theo}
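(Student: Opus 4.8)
The plan is to establish the existence result by a variational/degree-theoretic sub- and super-solution argument, borrowing the by-now classical strategy of Berestycki--Hamel. First I would reformulate the problem: seeking a pulsating front with speed $c$ in direction $n$ amounts to finding a solution $U(z,x)$, periodic in $x$, of \eqref{eq-tw} with the prescribed limits $U(-\infty,\cdot)=1$, $U(+\infty,\cdot)=0$. To handle the limits at $z=\pm\infty$ and the unboundedness of the $z$-variable, I would truncate to a finite slab $z\in(-a,a)$, impose the boundary conditions $U(-a,\cdot)=1-\delta$, $U(a,\cdot)=\delta$ (or homogeneous data after a suitable shift), and solve the truncated problem by Leray--Schauder degree theory or by the method of sub- and super-solutions in the slab. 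The key is that $0$ and $1$ are respectively a sub- and a super-solution (using $f\ge 0$ on $[0,1]$), so the truncated problems are solvable; then one lets $a\to\infty$ and passes to the limit using elliptic estimates and a normalization (e.g. fixing $\max_x U(0,x)$ at some level in $(0,1)$) to prevent the solutions from sliding off to $0$ or $1$.

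The heart of the matter is to identify the threshold speed $c^*(n)$ and to show the dichotomy ``exists iff $c\ge c^*(n)$''. For this I would introduce the linearization at $U=0$, namely the principal periodic eigenvalue problem associated with the operator obtained by linearizing \eqref{eq-tw}: for each $\lambda>0$ one considers the principal eigenvalue $k(\lambda,n)$ of
\begin{equation*}
-\nabla\cdot(\nabla\psi) + 2\lambda n\cdot\nabla\psi - \lambda^2\psi - f_u(x,0)\psi = k(\lambda,n)\,\psi,\qquad \psi>0\ \text{periodic},
\end{equation*}
and defines $c^*(n):=\min_{\lambda>0}\frac{-k(\lambda,n)}{\lambda}$ (after the appropriate sign conventions; this is the standard Berestycki--Hamel--Nadin formula). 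The exponentially decaying profile $e^{-\lambda(x\cdot n-ct)}\psi(x)$ built from this eigenfunction furnishes a supersolution near the leading edge, giving existence for $c>c^*(n)$; a sliding/compactness argument then yields the borderline case $c=c^*(n)$. Conversely, nonexistence for $c<c^*(n)$ follows by testing a putative front against the adjoint eigenfunction and deriving a contradiction with the exponential spreading rate, or equivalently by invoking the spreading-speed characterization: a front of speed $c<c^*(n)$ would contradict the known spreading result of \cite{Wein02}, \cite{Ber-Ham-02}.

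For the final assertion that every pulsating traveling wave is increasing in time, I would argue by the parabolic strong maximum principle and a sliding argument in the moving frame. One first shows, using the monostable structure and the comparison principle, that the front is monotone in $z$ (equivalently that $\partial_t u\ge 0$): comparing $u(t,x)$ with its time-translates $u(t+\tau,x)$ for $\tau>0$ large, one gets ordering for large translates and then slides $\tau$ down to $0$, using that the limits at $\pm\infty$ are ordered and that no intermediate periodic steady state exists (by the remark following Assumption~\ref{hyp:monostable}); once $\partial_t u\ge 0$ is known, the strong maximum principle applied to the linear equation satisfied by $\partial_t u$ upgrades this to strict positivity.

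The main obstacle I expect is the borderline case $c=c^*(n)$ together with the control of the limits at $z=\pm\infty$ after the slab approximation: one must rule out that the limiting profile degenerates (collapses to $0$, to $1$, or to a partial front not reaching both states), which requires a careful normalization and a Harnack/sliding argument exploiting the strict sign condition $(ii)$ in Assumption~\ref{hyp:monostable}, and near $u=1$ the one-sided monotonicity $(iii)$. The heterogeneity makes the eigenvalue analysis genuinely $n$-dependent and non-explicit, but since the relevant spreading and front-existence machinery is exactly what \cite{Ber-Ham-02} and \cite{Wein02} provide, the cleanest route for a ``known results'' section is to quote their construction and simply record the statement; a self-contained proof would follow the steps above.
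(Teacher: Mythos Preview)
The paper does not prove this theorem at all: it appears in Section~\ref{s:known_results} (``Some known results'') and is simply recalled from \cite{Ber-Ham-02} and \cite{Wein02} without argument. Your own closing remark already anticipates this --- in a background section one quotes the result and moves on --- so there is nothing to compare against.

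That said, your sketch of the Berestycki--Hamel strategy (slab truncation, sub/super-solutions, principal eigenvalue linearization at $0$, sliding for monotonicity) is a faithful outline of how the cited references proceed. One caution: the variational formula $c^*(n)=\min_{\lambda>0}\frac{-k(\lambda,n)}{\lambda}$ is only known to give the minimal front speed under a KPP-type hypothesis ($f(x,u)\le f_u(x,0)u$), whereas the present paper explicitly works in the broader monostable class (weak Allee, Arrhenius, etc.). In that generality the linearized speed is merely a lower bound for $c^*(n)$, and the actual threshold is obtained by the approximation/sliding machinery of \cite{Ber-Ham-02} or the discrete-time recursion approach of \cite{Wein02}, not by the eigenvalue formula alone. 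Your nonexistence argument via spreading is the robust one here; the eigenvalue construction should be presented as furnishing supersolutions for large $c$ rather than as characterizing $c^*(n)$.
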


In the Fisher-KPP case the continuity of the velocity map
$n\mapsto c^*(n)$, even if not explicitly stated, seems to follow
from the characterization of $c^*(n)$ (see \cite{Wein02},
\cite{Ber-Ham-02}). In the more general monostable case, such a
property was recently proved.

\begin{theo}[Continuity of minimal speeds, \cite{A-Gil}]\label{th:continuity}
The mapping $n\in \mathbb S ^{N-1}\mapsto c^*(n)$ is continuous.
\end{theo}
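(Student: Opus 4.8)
The plan is to prove continuity of $n \mapsto c^*(n)$ by combining a characterization of the minimal speed with a compactness/uniqueness argument for the pulsating front profiles. First I would recall (or re-derive) a variational or algebraic characterization of $c^*(n)$. In the Fisher-KPP subcase this is classical: $c^*(n) = \min_{\lambda>0} \mu(\lambda,n)/\lambda$, where $\mu(\lambda,n)$ is the principal periodic eigenvalue of the linearized operator $\varphi \mapsto \Delta \varphi + 2\lambda n\cdot\nabla\varphi + (\lambda^2 + f_u(x,0))\varphi$; since this elliptic eigenvalue depends analytically (in particular continuously) on the parameter $n$ through smooth coefficients, and the $\min$ over $\lambda$ of a jointly continuous function with good coercivity as $\lambda\to 0^+$ or $\lambda\to+\infty$ is continuous, one gets continuity immediately. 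The real content of Theorem~\ref{th:continuity} is the \emph{general} monostable case, where no such linear formula need hold, so I would instead work directly with the profiles $U$ solving \eqref{eq-tw}.

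The core of the argument is a limiting procedure. Fix $n_0\in\mathbb S^{N-1}$ and a sequence $n_k\to n_0$. I would first establish \emph{upper semicontinuity}: let $c$ be \emph{any} admissible speed in direction $n_0$, i.e.\ $c>c^*(n_0)$, and take a pulsating front $U_k$ with speed $c$ in direction $n_k$ (which exists for $k$ large, since $c>c^*(n_k)$ once we know $\limsup c^*(n_k)\le c$ --- so this half must be bootstrapped carefully, or one argues by contradiction from a subsequence along which $c^*(n_k)\to \ell$). Using parabolic/elliptic interior estimates (the coefficients of \eqref{eq-tw} and their $n$-dependence are uniformly controlled), after normalizing the profiles by a level-set condition such as $\max_x U_k(0,x) = 1/2$, I extract a locally uniform limit $U_\infty$ solving \eqref{eq-tw} with direction $n_0$ and the same speed $c$; the monotonicity in $z$ (time) passes to the limit, so the limit profile cannot be trivial, and the normalization prevents it from sliding off to $\pm\infty$. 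Hence $U_\infty$ is a genuine pulsating front in direction $n_0$ with speed $c$, giving $c^*(n_0)\le c$; letting $c\downarrow \limsup_k c^*(n_k)$ yields $c^*(n_0)\le\liminf_k c^*(n_k)$ --- actually this shows lower semicontinuity of $c^*$, and running the symmetric argument (starting from fronts in direction $n_0$ and perturbing the direction, using that admissibility is an open-ended condition $c\ge c^*(n)$) gives the reverse inequality $\limsup_k c^*(n_k)\le c^*(n_0)$.

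The step I expect to be the main obstacle is the compactness and non-degeneracy of the limit: ruling out that the normalized profiles $U_k$ either develop a degenerate limit (e.g.\ $U_\infty\equiv 1$ or $\equiv 0$) or fail to satisfy the correct asymptotics $U_\infty(-\infty,\cdot)=1$, $U_\infty(+\infty,\cdot)=0$. This requires uniform (in $k$) decay estimates on the fronts --- a uniform exponential bound $U_k(z,x)\le C e^{-\gamma z}$ as $z\to+\infty$ and $1-U_k(z,x)\le Ce^{\gamma z}$ as $z\to-\infty$ with constants independent of $k$ --- which one obtains from the monostable structure near $u=0$ and $u=1$ (Assumption~\ref{hyp:monostable}$(iii)$ for the upper state) together with a uniform lower bound on the speeds, $c^*(n)\ge c_{\min}>0$ on the compact sphere, via a sliding/comparison argument against explicit sub- and supersolutions of the linearized problems. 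Once these uniform tails are in hand, the limit front is automatically admissible, and the minimality of $c^*(n_0)$ over the set of admissible speeds, combined with the two semicontinuity bounds, closes the proof. \fdem
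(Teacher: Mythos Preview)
This theorem is not proved in the present paper: it is merely quoted from the companion work \cite{A-Gil} (the citation appears in the title of Theorem~\ref{th:continuity}, and no argument follows in the text). There is therefore no in-paper proof to compare your proposal against.

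On the substance of your outline: the compactness strategy on normalized profiles is the natural route in the general monostable case, and your identification of the key technical point --- uniform-in-$n$ exponential tails so that the limit profile is a genuine front --- is correct. However, your semicontinuity bookkeeping is tangled. The clean half is \emph{lower} semicontinuity: take minimal fronts $U_k$ with speed $c^*(n_k)$ in direction $n_k$, extract a limit front in direction $n_0$ with speed $\ell=\liminf_k c^*(n_k)$, and conclude $\ell\ge c^*(n_0)$ by minimality. The other half, $\limsup_k c^*(n_k)\le c^*(n_0)$, is where your argument becomes circular: you propose to take a front in direction $n_k$ with a fixed speed $c>c^*(n_0)$, but the existence of such a front already presupposes $c\ge c^*(n_k)$, which is what you are trying to prove; and passing such fronts to the limit would only recover the tautology $c^*(n_0)\le c$. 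You flag the circularity but do not resolve it. This half genuinely requires a different mechanism --- either a construction (perturbing the $n_0$-front, or building sub/supersolutions in direction $n_k$ for any $c>c^*(n_0)$), or an a priori comparison with an $n$-uniform upper barrier for the spreading speed --- and that is the missing ingredient in your proposal.
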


The introduction of these pulsating traveling waves was motivated
by their expected role in describing the large time behavior of
solutions of \eqref{monostable} for a large class of initial data.
In this context, let us recall the seminal result of
\cite{Wein02}: for any planar-like initial data in some direction
$n$, the associated solution of \eqref{monostable} spreads in the
$n$ direction with speed $c^* (n)$. Actually, for our singular
limit analysis, it turns out that we need the stronger property
that this spreading is uniform with respect to the direction $n$. This was
the purpose of our previous work \cite{A-Gil}.

\begin{theo}[Uniform spreading, \cite{A-Gil}]\label{th:unif_spreading}
Assume that $f$ is of the spatially periodic monostable type, i.e.
$f$ satisfies \eqref{periodicity} and
Assumption~\ref{hyp:monostable}. Let a family of nonnegative
initial data $(u_{0,n})_{n \in \mathbb{S}^{N-1}}$ such that
\begin{equation*}
\exists C >0, \qquad \forall n \in \mathbb{S}^{N-1}, \qquad x
\cdot n \geq C \Longrightarrow u_{0,n} (x) = 0,
\end{equation*}
\begin{equation*}
\exists \mu >0 , \qquad \exists K >0, \qquad \inf_{n \in
\mathbb{S}^{N-1}, \; x \cdot n \leq -K } u_{0,n} (x) \geq \mu,
\end{equation*}
\begin{equation*}
\inf_{n \in \mathbb{S}^{N-1}} \inf_{x \in \R^N} 1 - u_{0,n} (x)
>0.
\end{equation*}
 We denote by $(u_n)_{n \in \mathbb{S}^{N-1}}$ the associated
family of solutions of \eqref{monostable}.

Let $\alpha >0$ and $\eta >0$ be given. Then, there exists $\tau
>0$ such that for all $t \geq \tau$,
\begin{equation}\label{conclusion1}
\sup_{n \in \mathbb{S}^{N-1}} \ \sup_{x \cdot n \leq (c^*(n) -
\alpha)t} |1 -u_n (t,x)| \leq  \eta,
\end{equation}
\begin{equation}\label{conclusion2}
\sup_{n \in \mathbb{S}^{N-1}} \ \sup_{x \cdot n \geq (c^*(n) +
\alpha)t} u_n (t,x) \leq  \eta.
\end{equation}
\end{theo}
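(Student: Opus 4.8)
The plan is to sandwich each $u_n$ between suitable translates of pulsating fronts of speeds equal to $c^*(n)$ (from below) and just above $c^*(n)$ (from above), the crucial point being that every comparison is performed with a translation parameter that is \emph{uniform} in $n\in\mathbb S^{N-1}$. A few reductions first. Set $\delta_0:=\inf_n\inf_x\bigl(1-u_{0,n}(x)\bigr)>0$; since $f(\cdot,1)\equiv0$ and $u_{0,n}<1$, the comparison principle gives $u_n\le1$, so the first estimate concerns $1-u_n\ge0$. Since $\mu\,\mathbf{1}_{\{x\cdot n\le-K\}}\le u_{0,n}\le\bar g_n$ for a fixed smooth $\bar g_n\le1-\delta_0/2$ vanishing on $\{x\cdot n\ge C+1\}$, it suffices by comparison to prove \eqref{conclusion1} for the solution $\underline u_n$ issued from $\mu\,\mathbf{1}_{\{x\cdot n\le-K\}}$ and \eqref{conclusion2} for the one issued from $\bar g_n$. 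Finally, Theorem~\ref{th:continuity} gives $0<c_\ast:=\min_{\mathbb S^{N-1}}c^*\le\max_{\mathbb S^{N-1}}c^*<\infty$, and since the statement for small $\alpha$ implies the one for larger $\alpha$ I assume $0<\alpha<c_\ast$, so $c^*(n)-\alpha>0$ for all $n$. The engine of the proof is a \emph{uniform layer estimate}: for $\beta\in\{0,\alpha/2\}$, write a pulsating front of speed $c^*(n)+\beta$ (which exists by \cite{Ber-Ham-02}) as $U^\beta_n(x\cdot n-(c^*(n)+\beta)t,x)$ with $U^\beta_n$ periodic in $x$, strictly decreasing in $z$, and normalised by $\max_x U^\beta_n(0,x)=1/2$; then for every $\eta>0$ there is $Z(\eta)>0$ \emph{independent of $n$} with $U^\beta_n(z,\cdot)\le\eta$ for $z\ge Z(\eta)$ and $U^\beta_n(z,\cdot)\ge1-\eta$ for $z\le-Z(\eta)$. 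I would prove this by contradiction and compactness: a failure yields $n_k\to n_\infty$ and translations $\tilde z_k\to\pm\infty$ at which an $\eta$-level sits; by interior estimates for \eqref{eq-tw} the shifted profiles $U^\beta_{n_k}(\cdot+\tilde z_k,\cdot)$ converge to an entire solution of \eqref{eq-tw} in the direction $n_\infty$ with speed $c^*(n_\infty)+\beta$ --- here Theorem~\ref{th:continuity} identifies the limiting speed --- which is monotone in $z$ and, by the Remark following Assumption~\ref{hyp:monostable}, connects the steady states $1$ and $0$; hence it is a genuine pulsating front, whose $1/2$-level lies at a finite $z$, whereas by construction that level has been pushed to $\mp\infty$, contradicting strict monotonicity.

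For the upper bound \eqref{conclusion2} I use $\beta=\alpha/2$ and the uniform shift $\sigma:=C+1+Z(\delta_0/2)$: since $U^{\alpha/2}_n(\,\cdot-\sigma,\cdot)\ge1-\delta_0/2$ on $\{x\cdot n\le C+1\}$ while $\bar g_n$ vanishes beyond, one has $\bar g_n(x)\le U^{\alpha/2}_n(x\cdot n-\sigma,x)$, hence by comparison $u_n(t,x)\le U^{\alpha/2}_n(x\cdot n-(c^*(n)+\alpha/2)t-\sigma,x)$. On $\{x\cdot n\ge(c^*(n)+\alpha)t\}$ the argument is at least $\tfrac{\alpha}{2}t-\sigma\ge Z(\eta)$ as soon as $t\ge\tau:=\tfrac{2}{\alpha}\bigl(\sigma+Z(\eta)\bigr)$, which yields \eqref{conclusion2} with a uniform $\tau$.

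For the lower bound \eqref{conclusion1} the key additional input is a \emph{uniform finite-time half-space invasion}: there are $T_1>0$ and $a\in\R$, independent of $n$, with $\underline u_n(T_1,\cdot)\ge1-\delta_0/2$ on $\{x\cdot n\le a\}$. When $\underline f(v):=\inf_x f(x,v)>0$ on $(0,1)$ (as for the examples following Assumption~\ref{hyp:monostable}) this is immediate and altogether $n$-independent, because $(t,x)\mapsto\phi(t,x\cdot n)$ is a subsolution of \eqref{monostable} whenever $\phi$ solves the one-dimensional monostable equation $\partial_t\phi=\partial_{\xi\xi}\phi+\underline f(\phi)$, and the planar-like one-dimensional solution lying below $\mu\,\mathbf{1}_{\{\xi\le-K\}}$ reaches $1-\delta_0/2$ on a half-line at some $n$-independent time. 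In general one bootstraps through the (possibly several) intermediate zeros of $\underline f$, using at each step that such a constant is not a steady state of \eqref{monostable} by Assumption~\ref{hyp:monostable}$(ii)$, together with the directional spreading of \cite{Wein02} and a compactness argument over $\mathbb S^{N-1}$ of the same type as above. Granting this, I use the layer estimate with $\beta=0$ and the uniform shift $\sigma':=a+Z(\delta_0/2)$: then $\underline u_n(T_1,x)\ge U^0_n(x\cdot n-\sigma',x)$, so by comparison $\underline u_n(t,x)\ge U^0_n(x\cdot n-c^*(n)(t-T_1)-\sigma',x)$ for $t\ge T_1$. On $\{x\cdot n\le(c^*(n)-\alpha)t\}$ the $z$-argument is at most $-\alpha t+c^*(n)T_1-\sigma'\le-Z(\eta)$ for all $t$ past a threshold uniform in $n$ (using $c^*(n)\le\max c^*<\infty$); since then $U^0_n\ge1-\eta$ on all of that region --- this is a genuine front, not a cut-off barrier, so there is no transverse restriction --- and $u_n\le1$, we obtain \eqref{conclusion1}.

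The main obstacle throughout is precisely the uniformity in $n$: the periodic medium is anisotropic, so distinct directions are genuinely distinct problems, and the front widths, the front positions and the invasion times all a priori depend on $n$; the continuity of $n\mapsto c^*(n)$ of Theorem~\ref{th:continuity} is exactly what feeds the limiting-direction arguments in the compactness steps on the compact set $\mathbb S^{N-1}$. Among these, the subtlest is the uniform half-space invasion used in the lower bound, where for a general (non-KPP) monostable nonlinearity one must cross the possible intermediate zeros of $\inf_x f(x,\cdot)$.
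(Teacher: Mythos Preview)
This theorem is not proved in the present paper: it is quoted from the companion work~\cite{A-Gil}, so there is no ``paper's own proof'' here to compare against. The paper does however indicate, just before Lemma~\ref{steep_min_wave}, that the uniform asymptotics of the minimal fronts (your ``uniform layer estimate'' for $\beta=0$) is established in~\cite{A-Gil} and \emph{implies} the upper spreading bound~\eqref{conclusion2}. Your treatment of~\eqref{conclusion2} is therefore in line with what the authors have in mind, and the compactness route you sketch to obtain the uniform layer estimate is the natural one.

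There is, however, a genuine gap in your argument for the lower bound~\eqref{conclusion1}. After the uniform half-space invasion you assert
\[
\underline u_n(T_1,x)\ \ge\ U^0_n\bigl(x\cdot n-\sigma',x\bigr),\qquad \sigma'=a+Z(\delta_0/2),
\]
for all $x\in\R^N$. This cannot hold as written. On $\{x\cdot n\le a\}$ the argument $x\cdot n-\sigma'\le -Z(\delta_0/2)$ forces $U^0_n\ge 1-\delta_0/2$, but the front keeps climbing to~$1$ as $x\cdot n\to-\infty$, while you only know $\underline u_n(T_1,\cdot)\ge 1-\delta_0/2$ there; so the inequality fails for $x\cdot n$ very negative. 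On $\{x\cdot n>a\}$ you have no quantitative lower bound on $\underline u_n(T_1,\cdot)$ beyond positivity, whereas $U^0_n>0$ everywhere; so the inequality is unverified there as well. No choice of shift $\sigma'$ repairs both ends simultaneously, because a genuine front ranges over the whole interval $(0,1)$.

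The standard remedy, which is presumably what~\cite{A-Gil} does, is a Fife--McLeod type perturbed subsolution: one compares with $U^0_n\bigl(x\cdot n-c^*(n)t-\xi(t),x\bigr)-q(t)$ (or its positive part), where $q(t)\searrow 0$ exponentially and $\xi(t)$ stays bounded. This uses Assumption~\ref{hyp:monostable}$(iii)$ (stability of $1$) to absorb the error, and the point of the exercise is again to make the constants governing $q$ and $\xi$ uniform in $n$ --- which feeds on the same compactness over $\mathbb S^{N-1}$ and on Theorem~\ref{th:continuity}. Your outline is otherwise sound; it is this comparison step, not the half-space invasion, that needs the extra ingredient.
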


Let us notice that, under suitable assumptions such as those in
\cite{Ber-Ham-02}, \cite{A-Gil}, the above results are also
available for more general spatially periodic and monostable equations which
include heterogeneous diffusion and advection terms. We restrict ourselves to Problem
$(P^\ep)$ to simplify the presentation, but our argument easily extends to such a framework.

\subsection{On limit free boundary problems}\label{ss:free-boundary-pbs}

We recall that we aim at investigating the $\ep \to 0$ limit of
$u^\ep(t,x)$ the solution of $(P^\ep)$. Then the limit solution
$\tilde u ^\ep (t,x)$ will be a step function, taking the value
$1$ on one side of a moving interface which we will denote by $\Gamma_t$, and $0$ on the
other side. This sharp interface, if smooth, obeys the law of motion
\[
 (P^0) \quad\begin{cases}
  V_n=c^*(n) \quad \text { on } \Gamma_t& \vspace{5pt}\\
 \Gamma_t\big|_{t=0}=\Gamma_0,
 \end{cases}
\]
where $V_n$ denotes the normal velocity of $\Gamma  _t$ in the
exterior direction $n$, the unit outer normal of $\Gamma _t$ at
each point $x\in \Gamma _t$. Here $c^*(n)$ denotes the minimal
speed of the underlying monostable pulsating wave traveling in the
$n$-direction.

As we only know the mapping $n\mapsto
c^*(n)$ to be continuous, the smoothness of the interface, and hence the well posedness of $(P^0)$, is not guaranteed even for small positive times.

\medskip

A classical way to overcome the lack of smoothness is to define the limit interface via the level sets of the
viscosity solution of a Hamilton-Jacobi problem
\[
(P^0_{HJ})
\quad \begin{cases}
 \partial _t w+|\nabla w|c^*\left(\frac{\nabla w}{|\nabla w|}\right)=0 &\text{in }(0,\infty)\times \R ^N  \vspace{5pt}\\
 w(0,x)=w_0(x) &\text{in }\R ^N.
 \end{cases}
\]
Here $w_0:\R^N\to
\R$ is any uniformly continuous function  such that
\begin{equation}\label{au-debut}
\Omega _0=\{x:\; w_0(x)<0\},\quad \Gamma _0=\{x:\; w_0(x)=0\}.
\end{equation}
Thanks to the continuity of $c^* (n)$ with respect to $n \in
\mathbb{S}^{N-1}$, namely Theorem \ref{th:continuity}, the
Hamilton-Jacobi problem admits a unique viscosity solution $w\in
C((0,\infty)\times \R ^N)$, and
\begin{equation*}%\label{apres}
\Omega _t:=\{x:\; w(t,x)<0\},\quad \Gamma _t:=\{x:\; w(t,x)=0\}
\end{equation*}
do not depend on the choice of $w_0$ as above (see Theorems~4.3.5
and 4.3.6 in \cite{Gig}). As long as the solution of $(P^0)$ has a
smooth solution, both motions coincide, which is why we still
denote it by $\Gamma_t$. However, the Hamilton-Jacobi approach
does not require smoothness as $(P^0)$ does, and therefore enables
to define the zero level set $\Gamma _t$ as the limit interface
for all $t\geq 0$.

The literature on this level set approach via viscosity solutions
of Hamilton-Jacobi equations is rather large. The reader may
consult \cite{Che-Gig-Got} or the book of Giga \cite{Gig} and the
references therein.

\medskip

Thanks to the convexity of the initial set $\Omega_0$, a so-called
Hopf formula~\cite{Hopf} is actually available and provides an
explicit depiction of the motion, as stated in the following
result.

\begin{prop}[The limit interface explicitly]\label{prop:motion?}
Let Assumption \ref{H1} $(ii)$ hold. Let the limit interface
$\Gamma _t$ be defined via the Hamilton-Jacobi problem
$(P^0_{HJ})$ as above.

Then, for all time $t \geq 0$, the set $\Gamma_t$ is the zero
level set of the convex function
$$v (t,x) := \max_{y \in \Gamma_0}\ (x-y).n_y - c^* (n_y) t,$$
where $n_y$ denotes the outward unit normal vector of $\Gamma_0$
at point $y$. In particular, for all time $t\geq 0$, the set
$\Gamma_t$ remains sharp, in the sense that it does not develop an
interior, and the bounded domain $\Omega_t$ delimited by
$\Gamma_t$ remains convex.
\end{prop}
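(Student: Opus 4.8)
The plan is to verify directly that the explicit function $v(t,x)$ furnishes the viscosity solution $w$ of $(P^0_{HJ})$, up to the level-set-invariance that is recalled after \eqref{au-debut}. More precisely, I would first choose a convenient admissible initial datum, namely the signed distance function $w_0 = d(\cdot,\Omega_0)$ (negative inside $\Omega_0$, positive outside), which is uniformly continuous and satisfies \eqref{au-debut} since $\Omega_0$ is open, bounded, convex with smooth boundary $\Gamma_0$. The Hamiltonian $H(p) = |p|\, c^*(p/|p|)$ is continuous on $\R^N\setminus\{0\}$ and positively $1$-homogeneous, so the Hopf–Lax/Hopf formula of \cite{Hopf} applies once convexity enters. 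Because $c^*$ itself is only known to be continuous, $H$ need not be convex; the trick is that convexity of the sublevel sets is inherited from convexity of $\Omega_0$ rather than from $H$. So I would not try to write a Hopf formula for $w$ itself, but instead exhibit the candidate interface and show its zero level set solves $(P^0)$ in the viscosity sense wherever it is smooth, and more robustly, show that $v(t,\cdot)$ and $w(t,\cdot)$ have the same zero level set by a comparison argument.

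The key steps, in order, are as follows. (1) Show $v(t,x)=\max_{y\in\Gamma_0}\big((x-y)\cdot n_y - c^*(n_y)t\big)$ is well defined (the max is attained since $\Gamma_0$ is compact and $y\mapsto n_y$ is continuous by smoothness of $\Gamma_0$) and convex in $x$ for each fixed $t$, being a supremum of affine functions of $x$. (2) Identify $\Omega_t:=\{v(t,\cdot)<0\}$ as $\{x : \text{for all } y\in\Gamma_0,\ (x-y)\cdot n_y < c^*(n_y)t\}$, i.e. the intersection over $y\in\Gamma_0$ of the half-spaces $\{(x-y)\cdot n_y < c^*(n_y)t\}$, which is therefore convex and, for $t$ small, a genuine bounded neighbourhood of $\Omega_0$; boundedness for all $t$ follows because $\{n_y : y\in\Gamma_0\}$ spans all of $\mathbb S^{N-1}$ (convex body with smooth boundary), so finitely many of these half-spaces already bound the set. (3) Check the geometric law: at a point $x\in\Gamma_t$ where the boundary is smooth, the active constraints $y$ realizing the max have $n_y$ equal to the outward normal of $\Gamma_t$ at $x$, and differentiating $(x-y)\cdot n_y - c^*(n_y)t = 0$ along the motion gives exactly $V_n = c^*(n)$; this shows $\Gamma_t$ solves $(P^0)$ classically on its smooth part. (4) Promote this to the viscosity level: verify that $v$ is a viscosity solution of the level-set PDE $\partial_t v + |\nabla v|\,c^*(\nabla v/|\nabla v|) = 0$ — this is where the $1$-homogeneity of $H$ and the sup-of-affine structure of $v$ combine, via the standard fact that $v(t,x) = \sup_{p\in\Gamma_0^*} \{x\cdot p - v^*(0,p) - H(p)t\}$-type representations produce viscosity solutions when $H$ is merely continuous and $v_0$ is convex (this is precisely the content of Hopf's formula for convex initial data). (5) Invoke Theorem \ref{th:continuity} for uniqueness of the viscosity solution and the level-set invariance from \cite{Gig} (Theorems 4.3.5–4.3.6) to conclude that the zero level set of $v$ coincides with $\Gamma_t$ as defined via $(P^0_{HJ})$, independently of the choice of $w_0$.

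The main obstacle is step (4): since $n\mapsto c^*(n)$ is only continuous, $H$ is not convex, and one cannot invoke the usual Hopf–Lax formula for convex Hamiltonians. One must instead use Hopf's other formula, valid for arbitrary continuous $H$ provided the \emph{initial datum} is convex (or concave). Here the relevant initial object is $\Gamma_0$ together with its Gauss map, and the representation $v(t,x)=\max_{y\in\Gamma_0}(\cdots)$ is exactly a Hopf-type formula in disguise: one rewrites it as $v(t,x) = \max_{n\in\mathbb S^{N-1}}\big(x\cdot n - h_{\Omega_0}(n) - c^*(n)t\big)$ where $h_{\Omega_0}(n)=\sup_{y\in\Omega_0} y\cdot n$ is the support function of the convex body $\Omega_0$ (using $\sup_{y\in\Omega_0}y\cdot n = \sup_{y\in\Gamma_0} y\cdot n$ and $y\cdot n_y = h_{\Omega_0}(n_y)$ at boundary points). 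In this form $v$ is manifestly of Hopf type with convex "conjugate data" $h_{\Omega_0}$, so the theory of \cite{Hopf} (see also \cite{Gig}) yields that $v$ is the unique viscosity solution of the level-set equation with the convex initial datum whose support function is $h_{\Omega_0}$ — and that initial datum has $\Omega_0$ as its negative set. The remaining checks (attainment of maxima, convexity of level sets, non-development of interior, boundedness of $\Omega_t$) are then routine consequences of the convex-geometric structure, using that $\Gamma_0$ has a full-dimensional Gauss image.
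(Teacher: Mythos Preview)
Your step~(4) contains a genuine gap: the function $v$ is \emph{not} a viscosity solution of the level-set equation on all of $(0,\infty)\times\R^N$, and Hopf's formula with convex initial datum $v(0,\cdot)$ does not return $v$. The point is that Hopf's formula of Bardi--Evans reads
\[
u(t,x)=\sup_{p\in\R^N}\big[x\cdot p - u_0^*(p)-tH(p)\big],
\]
a supremum over all $p$ (effectively over the closed unit ball, since $u_0^*\equiv+\infty$ outside it), whereas your rewriting of $v$ is a supremum only over the unit sphere. These do not coincide: taking $p=0$ in the Hopf supremum gives $-u_0^*(0)=\inf_x v(0,x)$, so the Hopf solution is bounded below by this constant for all time, while $v(t,\cdot)$ drops below it as $t$ grows. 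Concretely, for $\Omega_0=B_R(0)$ and constant speed $c^*\equiv c$, one has $v(t,x)=|x|-R-ct$; the test function $\varphi(t,x)=-R-ct$ touches $v$ from below at $(t_0,0)$ and yields $\partial_t\varphi+|\nabla\varphi|\,c=-c<0$, so $v$ fails the supersolution test at the origin. Your identification of $v$ as ``manifestly of Hopf type'' is therefore incorrect.

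The paper confronts exactly this obstruction. It observes that the supersolution inequality can be rescued only where $v\geq 0$, and for this reason truncates: one shows instead that $v_\delta:=\max(-\delta,v)$ is a viscosity solution for $\delta>0$ small. The subsolution part is immediate (supremum of solutions), but the supersolution part requires a hands-on argument with subdifferentials: at a test point $(t_0,x_0)$ with $v_\delta(t_0,x_0)\geq -\delta$, one writes $(\partial_t\varphi,\nabla\varphi)$ as a convex combination $\sum\lambda_i(-c^*(n_i),n_i)$ of active normals and exploits the convexity of $\Omega_0^{-\delta}$ (giving $(y_i-y_0)\cdot n_i\geq 0$) together with the crucial sign $v(t_0,x_0)\geq -\delta$ to close the inequality. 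Since the zero level set of $v_\delta$ agrees with that of $v$, this suffices. In short, the missing ingredient in your proposal is precisely this truncation and the accompanying convex-analytic computation; without it, the appeal to Hopf's formula does not establish what you need.
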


Roughly speaking, this proposition means that the motion can be
described by first looking at $\Gamma_0$ as the envelop of some
half-spaces, and by then letting each of those half-spaces move at
the speed $c^* (n)$ corresponding to its normal direction. We
refer to \cite{Bardi-Evans} where the Hopf formula was revisited
in the context of viscosity solutions, and obtained using the more
general theory of differential games. We propose a direct proof of
Proposition~\ref{prop:motion?} in
subsection~\ref{ss:interface_motion}.

\section{Main result}\label{ss:main-results}

We are now in the position to state our main result of convergence
of $(P^\ep)$ to the interface motion defined via the level sets of
solutions of
 $(P^0_{HJ})$. Together with Proposition~\ref{prop:motion?}, the theorem below provides a precise depiction of the shape of solutions or, in other words, of the expansion of the habitat of the population.

\begin{theo}[Convergence to a propagating interface]\label{THEO-conv}
Let the nonlinearity $f$ be of the spatially periodic monostable
type, i.e. $f$ satisfies \eqref{periodicity} and
Assumption~\ref{hyp:monostable}, and let the initial data $g$ in
Problem $(P^\ep)$ satisfy Assumption~\ref{H1}. For any $\ep>0$,
let $u^\ep:[0,\infty)\times \R ^N \to \R$ be the solution of
$(P^\ep)$.  Let $\Gamma _t$ and $\Omega _t$ be defined via the
Hamilton-Jacobi problem $(P^0_{HJ})$ as in subsection
\ref{ss:free-boundary-pbs}.

Then, the following convergence results hold.
\begin{itemize}
\item [$(i)$] For any $0<\tau\leq T < +\infty$ and small $\beta>0$,
we have
\begin{equation*}
 \sup _{\tau \leq t \leq T} \; \sup _{ \{ x : d(t,x) \leq - \beta \}}\; \left|1 -u^\ep(t,x)\right|\to 0 \quad \text{ as } \ep \to 0;
\end{equation*}
\item [$(ii)$] For any $0<  T < +\infty$ and small $\beta>0$,
we have
\begin{equation*}
 \sup _{0 \leq t \leq T} \; \sup _{\{x:
d(t,x)\geq \beta\}}\; |u^\ep(t,x)|\to 0 \quad \text{ as } \ep \to
0.
\end{equation*}
\end{itemize}
Here $d (t,\cdot)$ denotes the signed distance to the set
$\Gamma_t$, which is chosen to be negative in $\Omega_t$ and
positive  in $\R ^N \setminus (\Gamma _t \cup {\Omega_t})$.
\end{theo}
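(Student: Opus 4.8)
The plan is to prove Theorem~\ref{THEO-conv} by a sub- and supersolution argument, treating the two convergence statements $(i)$ and $(ii)$ via essentially independent constructions, both relying crucially on the uniform spreading estimate (Theorem~\ref{th:unif_spreading}) and on the explicit convex description of $\Gamma_t$ provided by Proposition~\ref{prop:motion?}.

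For part $(ii)$ (the outer estimate, \emph{i.e.} $u^\ep\to 0$ outside the expanding front) the natural approach is to build a family of planar supersolutions, one for each direction $n\in\mathbb S^{N-1}$, whose profile is essentially a pulsating front moving with speed $c^*(n)$ in direction $n$ (in the original, unrescaled variables), and to take the infimum over $n$. After the hyperbolic rescaling $u^\ep(t,x)=U(\tfrac{x\cdot n - c^*(n)t/\ep}{?},\tfrac x\ep)$-type ansatz, a single pulsating front with speed $c^*(n)$ gives, in the limit $\ep\to 0$, a step function that is $0$ on the half-space $\{x\cdot n - x_0\cdot n > c^*(n)t\}$. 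Taking the lower envelope over all supporting half-planes of the convex initial set $\Omega_0$ and using Proposition~\ref{prop:motion?}, the envelope of these supersolutions has $\{v(t,\cdot)>0\}$ as its zero set, which is exactly the region where we must show $u^\ep\to 0$. One must check that the minimum of supersolutions is a supersolution (true for this equation since the operator is, roughly, degenerate-parabolic/proper), that the envelope lies above the initial data $g$ (here one uses $g<1$, compact support in $\overline{\Omega_0}$, and that each planar front is close to $1$ far behind and close to $0$ far ahead — a matter of choosing the phase shifts correctly and possibly adding an $O(\ep)$ correction to absorb the monostable, non-KPP behaviour of the tails), and that the minimal speed $c^*$ is the correct speed — this is where the $C^{1,\alpha}$ regularity and the construction of genuine pulsating-front supersolutions from \cite{Ber-Ham-02} enter. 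The comparison principle then yields $u^\ep\le$ envelope, and letting $\ep\to 0$ with the uniform (in $n$) convergence of the envelope gives $(ii)$ uniformly on $[0,T]$.

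For part $(i)$ (the inner estimate, $u^\ep\to 1$ well inside $\Omega_t$) the strategy is dual but genuinely harder, and this is the step I expect to be the main obstacle. One cannot simply use a single pulsating-front subsolution because the interface speed depends on the \emph{moving} normal direction, so a planar subsolution would underestimate the invaded region. Instead I would proceed in two stages. First, a ``generation of interface'' step: starting from the initial data $g$ which is positive on $\Omega_0$, one shows that after a short time (in the $\ep$-rescaled equation, after time $O(\ep|\log\ep|)$) the solution $u^\ep$ has become close to $1$ on any compact subset of $\Omega_0$; this uses assumption~\ref{hyp:monostable}$(ii)$-$(iii)$ and the instability of $0$, via an ODE/sub-solution comparison with the spatially-frozen problem. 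Second, a ``propagation'' step: one covers $\Omega_t\setminus\{d(t,\cdot)\le-\beta\}$ by countably many balls and, for each point $x$ with $d(t_0,x)\le-\beta$, one exhibits a compactly supported, rotationally-adapted subsolution built from the uniform spreading result — concretely, one fixes a point and a direction, uses Theorem~\ref{th:unif_spreading} to guarantee that a solution started from a small bump spreads at least at speed $c^*(n)-\alpha$ in \emph{every} direction $n$ uniformly, rescales, and then, via the geometric characterization in Proposition~\ref{prop:motion?} and a Huygens-type principle (iterating the spreading over small time steps, each step in the locally-relevant normal direction), shows that the invaded set at time $t_0$ contains $\{d(t_0,\cdot)\le-\beta\}$ up to an error that vanishes as the time step and $\alpha$ go to zero. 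The delicate point is making this iteration rigorous while keeping track that the ``speed'' $c^*(n)$ one is allowed to use at each step is the one for the current normal, and reconciling the discrete Huygens construction with the viscosity-solution interface $\Gamma_t$; the convexity of $\Omega_0$ (hence of $\Omega_t$, by Proposition~\ref{prop:motion?}) is what makes this matching possible, since for convex fronts the Hopf-formula interface is precisely the envelope obtained by moving each supporting hyperplane at its own speed. Combining the generation step, the propagation step, and the comparison principle, and finally letting $\ep\to 0$, yields $(i)$.

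A few technical reductions would streamline the write-up: by parabolic regularity and the maximum principle, $0\le u^\ep\le \max(1,\sup g)$, and using assumption~\ref{hyp:monostable}$(iii)$ one may assume $0\le u^\ep\le 1$ after a harmless modification of $f$ above $1$; the non-KPP nature of $f$ near $0$ only affects the precise tail behaviour of fronts and is handled, as in \cite{A-Duc2,A-Gil}, by inserting polynomially small corrections in the sub/supersolutions rather than exponentially small ones. The uniformity over $t\in[\tau,T]$ in $(i)$ and over $t\in[0,T]$ in $(ii)$ follows because all the estimates above are uniform in the base point $t_0\in[\tau,T]$ (resp. $[0,T]$) and in the direction $n\in\mathbb S^{N-1}$, which is exactly the role played by the uniform spreading Theorem~\ref{th:unif_spreading} — this is why that strengthening of \cite{Wein02} was needed.
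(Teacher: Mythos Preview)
Your overall strategy matches the paper's: part~$(ii)$ via planar pulsating-front supersolutions indexed by the supporting hyperplanes of $\Omega_0$, part~$(i)$ via a generation step followed by an iterative propagation argument driven by the uniform spreading of Theorem~\ref{th:unif_spreading}. A few points of divergence and one genuine gap are worth noting.

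For $(ii)$, one small correction: the infimum of supersolutions of a semilinear parabolic equation is \emph{not} in general a supersolution, but this does not matter here --- you only need $u^\ep\le\overline u_y$ for each $y\in\Gamma_0$, hence $u^\ep\le\inf_y\overline u_y$, which is exactly what the paper does. No $O(\ep)$ correction is required: a fixed phase shift (independent of $\ep$) in the front profile, chosen via the uniform tail asymptotics of the minimal waves (Lemma~\ref{steep_min_wave}), suffices to dominate $g$ at $t=0$.

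For $(i)$, the generation step in the paper is simpler than you suggest: no ODE analysis is needed and the time is $O(\ep)$, not $O(\ep|\log\ep|)$; one just places a translated copy of a Weinberger compactly-supported initial datum below $g$ and waits a fixed (unrescaled) time. The real gap in your outline is the propagation step. You correctly identify the difficulty --- reconciling the discrete Huygens iteration with the viscosity interface $\Gamma_t$ --- but you do not say how to resolve it. The obstruction is that $\Gamma_t$, while convex, need not be smooth, and your ``local subsolution in the current normal direction'' requires a well-defined normal and a curvature bound to fit the subsolution's support inside $\Omega_t$. The paper handles this by first \emph{regularizing the motion}: via vanishing viscosity it constructs a smooth, convex, strictly slower interface $\Gamma_t^\alpha$ (Proposition~\ref{regularis}) satisfying $\partial_t v^\alpha+|\nabla v^\alpha|(c^*(\nabla v^\alpha/|\nabla v^\alpha|)-\alpha)\ge 0$, and runs the iteration against $\Gamma_t^\alpha$ rather than $\Gamma_t$. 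Smoothness gives uniform curvature bounds, so one can place, at each point of $\Gamma_{t_0}^\alpha$, a ``canister'' of radius $\sim\sqrt\ep$ inside $\Omega_{t_0}^\alpha$; the iteration then proceeds in time steps of size $A_1\sqrt\ep$, with a careful linear estimate (comparing a box-supported solution to the planar one) ensuring each step advances by $(c^*(n)-\alpha/2)A_1\sqrt\ep$ in the local normal direction. Without this regularization, your Huygens argument is not rigorous as stated.
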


The rest of the paper is devoted to the proof of
Theorem~\ref{THEO-conv} and is organized as follows.

We start, in Section \ref{s:themotion}, by some results on the
motion of the limit interface which are crucial to our analysis of
the parabolic problem $(P^\ep)$, but are also of independent
interest for the Hamilton-Jacobi problem $(P^0_{HJ})$. On the one
hand, we prove Proposition~\ref{prop:motion?}, hence providing an
explicit description of the limit interface. On the other hand, we
approximate the motion defined via $(P_{HJ}^0)$ by a smooth
motion, which preserves all its essential geometric properties.

 To prove the control from below $(i)$ of Theorem \ref{THEO-conv}, we
distinguish two regimes. First, we prove in
Section~\ref{s:generation-below} the emergence of transition
layers for $u^\ep(t,x)$ in very small times. The propagation of
the layers (from below) that occurs in later times is then studied
in Section~\ref{s:propagation-below}. The heterogeneity rises some
technical difficulties since pulsating fronts depend non trivially
on the direction of propagation. Roughly speaking, we construct
\lq\lq local" subsolutions and combine the uniform spreading
properties of Theorem \ref{th:unif_spreading} with an iteration
procedure. The construction of such subsolutions requires
smoothness of the interface, which insures that the motion is
locally governed by the planar dynamics of the
 rescaled equation~\eqref{monostable}. Hence, we actually apply
 the above procedure to the smooth approximated motion defined in
 Section \ref{s:themotion}.

Last, in Section \ref{s:control-above},
to prove the control from above $(ii)$ of Theorem \ref{THEO-conv}, we construct a family of
planar supersolutions
--- whose envelop \eqref{def:v} coincides with the explicit characterization of
Proposition~\ref{prop:motion?} --- and use again the uniform
spreading properties of Theorem \ref{th:unif_spreading}.

\section{Some results on the motion of the limit interface}\label{s:themotion}

In this section, we are only concerned with the limit interface
motion $(P^0_{HJ})$. We first prove the explicit description of
Proposition~\ref{prop:motion?}, and then proceed to an
approximation of the motion $(P^0_{HJ})$ by a smooth motion. As
mentioned before, smoothness will play an
 essential role in the convergence of solutions of $(P^\ep )$, and more specifically in Section~\ref{s:propagation-below}.

\subsection{Characterization of the motion}\label{ss:interface_motion}

We begin by recalling that
\begin{equation*}\label{def:v}
v(t,x):=\max _{y\in \Gamma _0} \; \{(x-y).n_y -c^*(n_y)t\},
\end{equation*}
where  $n_y$ is the outward unit normal of $\Gamma_0$ at point
$y$. The zero level sets of $v(t,x)$ are obtained by \lq\lq
intersecting all the half-planes arising from $y \in \Gamma _0$
and propagating with speed $c^*(n_y)$ in direction $n_y$". We will
prove that, at least for its level sets lying above some small
$-\delta <0$, the function $v$ is a viscosity solution of the
Hamilton-Jacobi problem $(P^0_{HJ})$. As the motion of interface
is defined by the zero level set of the viscosity solution, this
will be enough  to infer that its zero level set defines the
appropriate interface $\Gamma_t$, that is Proposition
\ref{prop:motion?}.

\begin{rem}
Write $v(t,x)=\max _{y\in \Gamma _0} \psi(t,x,y)$ where
$$
\psi: (t,x,y)\in (0,\infty)\times \R ^N \times \Gamma _0 \mapsto
(x-y).n_y -c^*(n_y)t,
$$
is continuous with respect to $y\in \Gamma _0$, smooth and convex
(since linear) with respect to $t>0$ and $x\in \R ^N$. For a given
$(t,x)\in(0,\infty)\times \R ^N$, let us denote by $Y(t,x)$ the
set of $y\in\Gamma _0$ that maximize $\psi(t,x,\cdot)$, that is
$$
Y(t,x)=\{y\in\Gamma _0:\; v(t,x)=\psi(t,x,y)\}.
$$
If, for a given $(t_0,x_0)$, the set $Y(t_0,x_0)$ reduces to a
singleton $y_0$ then it follows from classical results of convex
analysis (see \cite[Corollary 4.4.5]{Hir-Lem}) that $v$ is
differentiable at $(t_0,x_0)$, and
$$
\partial _t v(t_0,x_0)=\partial _t
\psi(t_0,x_0,y_0)=-c^*(n_{y_0}),\quad \nabla _x v(t_0,x_0)=\nabla
_x\psi(t_0,x_0,y_0)=n_{y_0},
$$
so that $v$ satisfies the Hamilton-Jacobi equation $\partial _t v+
|\nabla v|c^*\left(\frac{\nabla v}{|\nabla v|}\right)= 0$ in the
classical sense at $(t_0,x_0)$. However, we have to deal with the
case where $Y(t_0,x_0)$ is not a singleton. As we will see, this
can be performed in the set $\{v\geq -\delta\}$ for some small
enough $\delta>0$, and requires to cut-off the set $\{v<
-\delta\}$.
\end{rem}

We prove the following proposition, of which
Proposition~\ref{prop:motion?} is an immediate corollary.

\begin{prop}[A solution of the Hamilton-Jacobi problem]\label{prop:sol-HJ} For any small enough $\delta >0$, the function
\begin{equation*}%\label{def:wdelta}
 v_\delta (t,x):=\max( -\delta;v(t,x)),
\end{equation*}
is a (viscosity) solution of the equation of the limit problem
$(P^0_{HJ})$, that is
\begin{equation}\label{eq-HJ}
\partial _t v_\delta + |\nabla v_\delta |c^*\left(\frac{\nabla v_\delta}{|\nabla
v_\delta |}\right)= 0, \quad \text{ in } (0,\infty)\times\R^N,
\end{equation}
and, by convexity, $v_\delta(0,x)$ is an admissible initial data
for $(P^0_{HJ})$ in the sense of \eqref{au-debut}.
\end{prop}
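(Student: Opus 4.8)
The plan is to verify the viscosity solution property of $v_\delta$ separately at points where the obstacle $-\delta$ is active and at points where it is not, using the convexity structure of $v$. First I would observe that $v(t,x)=\max_{y\in\Gamma_0}\psi(t,x,y)$ is convex in $(t,x)$ as a supremum of affine functions, hence $v_\delta=\max(-\delta;v)$ is also convex. Convexity immediately yields that at $(0,x)$ the function $v_\delta(0,\cdot)$ is an admissible initial datum: for small $\delta$, the sublevel set $\{v_\delta(0,\cdot)<0\}$ coincides with $\{v(0,\cdot)<0\}$, and since $v(0,x)=\max_{y\in\Gamma_0}(x-y)\cdot n_y$ is the support-type function whose zero set is exactly $\Gamma_0$ (because $\Omega_0$ is convex with smooth boundary $\Gamma_0$, each boundary point is the unique maximizer of its own supporting hyperplane), we get $\{v_\delta(0,\cdot)<0\}=\Omega_0$ and $\{v_\delta(0,\cdot)=0\}=\Gamma_0$, matching \eqref{au-debut}.

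Next I would check the equation \eqref{eq-HJ} pointwise. In the open set $\{v<-\delta\}$, $v_\delta\equiv-\delta$ is constant, so $\partial_t v_\delta=0$ and $\nabla v_\delta=0$; by convention the Hamiltonian $|\nabla v_\delta|c^*(\nabla v_\delta/|\nabla v_\delta|)$ vanishes when $\nabla v_\delta=0$ (recall $c^*$ is bounded, so $|p|c^*(p/|p|)\to 0$ as $p\to 0$), hence \eqref{eq-HJ} holds classically there. On the open set $\{v>-\delta\}$ one has $v_\delta=v$ locally, so it suffices to show $v$ is a viscosity solution there. Here the key facts are: (a) $v$ is convex and locally Lipschitz, with $|\nabla v|=1$ wherever it is differentiable (since each $\psi(t,x,\cdot)$ has spatial gradient $n_y\in\mathbb S^{N-1}$ and $\partial_t\psi=-c^*(n_y)$, so at a differentiability point the gradient equals $(\partial_t v,\nabla_x v)=(-c^*(n_{y_0}),n_{y_0})$ for the unique maximizer $y_0$, giving $\partial_t v+|\nabla_x v|c^*(\nabla_x v/|\nabla_x v|)=-c^*(n_{y_0})+c^*(n_{y_0})=0$); and (b) a convex function touched from above at $(t_0,x_0)$ by a smooth $\varphi$ is differentiable at $(t_0,x_0)$ with $\nabla\varphi=\nabla v$, so the subsolution inequality follows from (a). For the supersolution inequality, if a smooth $\varphi$ touches $v$ from below at $(t_0,x_0)$, then $\nabla\varphi(t_0,x_0)$ lies in the subdifferential $\partial v(t_0,x_0)=\mathrm{conv}\{(-c^*(n_y),n_y):y\in Y(t_0,x_0)\}$; I would then need the inequality $\partial_t\varphi+|\nabla_x\varphi|c^*(\nabla_x\varphi/|\nabla_x\varphi|)\le 0$ for every such convex combination, which amounts to checking $-\sum\lambda_i c^*(n_{y_i})+\big|\sum\lambda_i n_{y_i}\big|\,c^*\!\big(\tfrac{\sum\lambda_i n_{y_i}}{|\sum\lambda_i n_{y_i}|}\big)\le 0$.

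The main obstacle is precisely this last convexity/geometry check on the subdifferential, together with the reason the cut-off at $-\delta$ is needed. The point is that $q\mapsto|q|c^*(q/|q|)$, extended positively $1$-homogeneously, need not be convex for a general continuous $c^*$, so one cannot blindly invoke Jensen. This is where the convexity of $\Omega_0$ enters in a finer way: I would argue that for $(t_0,x_0)$ with $v(t_0,x_0)>-\delta$ and $\delta$ small, the maximizing set $Y(t_0,x_0)\subset\Gamma_0$ is contained in a small arc of the smooth boundary (since points $y$ far apart on $\Gamma_0$ propagate their supporting hyperplanes in genuinely different directions and, by compactness and smoothness, cannot both remain maximal once $v$ has risen above $-\delta$ — quantifying this is the crux), so the normals $n_{y_i}$ are nearly aligned; then $\sum\lambda_i n_{y_i}$ is close to a unit vector and, using continuity of $c^*$ (Theorem~\ref{th:continuity}), the Hamiltonian term is close to $c^*$ evaluated at a point near each $n_{y_i}$, making the required inequality reduce to an $o(1)$ perturbation of the trivial equality. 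I would formalize this by a contradiction/compactness argument: if not, there is a sequence $\delta_k\to0$ and maximizing pairs at spatial distance bounded below, forcing $v$ at those points down toward $-\infty$ as one moves along $\Gamma_0$, contradicting $v(t_0,x_0)>-\delta_k$ for $k$ large. Once the diameter of $Y(t_0,x_0)$ is controlled, the supersolution inequality, and hence Proposition~\ref{prop:sol-HJ}, follows; Proposition~\ref{prop:motion?} is then immediate since the zero level set of $v_\delta$ equals that of $v$ for small $\delta$, and uniqueness of viscosity solutions of $(P^0_{HJ})$ (guaranteed by Theorem~\ref{th:continuity}) identifies $\Gamma_t$ with $\{v(t,\cdot)=0\}$.
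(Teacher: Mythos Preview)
Your supersolution argument has a genuine gap. First, a sign slip: for the supersolution test (smooth $\varphi$ touching $v_\delta$ from below, i.e.\ $v_\delta-\varphi$ has a local minimum), the required inequality is $\partial_t\varphi + |\nabla\varphi|\,c^*(\nabla\varphi/|\nabla\varphi|) \geq 0$, not $\leq 0$; equivalently, with $(\partial_t\varphi,\nabla\varphi)=(-\sum_i\lambda_ic^*(n_i),\sum_i\lambda_i n_i)$ one must show
\[
c^*\!\left(\frac{\sum_i\lambda_i n_i}{\big|\sum_i\lambda_i n_i\big|}\right) \;\geq\; \frac{\sum_i\lambda_i c^*(n_i)}{\big|\sum_i\lambda_i n_i\big|}.
\]
More seriously, your strategy for this inequality rests on the claim that, whenever $v(t_0,x_0) > -\delta$ with $\delta$ small, the maximizing set $Y(t_0,x_0)\subset\Gamma_0$ lies in a small arc. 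This is false in general. When $c^*$ is genuinely anisotropic the zero level set $\Gamma_t = \{v(t,\cdot)=0\}$ typically develops corners, and at such a corner $x_0$ one has $v(t_0,x_0)=0>-\delta$ while $Y(t_0,x_0)$ contains points of $\Gamma_0$ whose normals are far apart (the two supporting half-spaces meeting at the corner). For a concrete picture, take $\Gamma_0$ the unit circle in $\R^2$ and a continuous $c^*$ with $c^*(n_1)=c^*(n_2)$ for two orthogonal directions $n_1,n_2$ and $c^*$ sufficiently larger for directions in between; for large $t$ the boundary $\Gamma_t$ has a corner in the quadrant spanned by $n_1,n_2$, at which both corresponding boundary points of $\Gamma_0$ are maximizers. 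Your compactness sketch therefore cannot be completed, and even if the normals \emph{were} nearly aligned, a continuity argument would only give approximate equality of the two sides, not the one-sided inequality above.

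The paper's proof does not attempt to control the size of $Y(t_0,x_0)$. After a shift replacing the cut-off at level $-\delta$ by a cut-off at level $0$ (working over the inner hypersurface $\Gamma_0^{-\delta}$, whose normals coincide with those of $\Gamma_0$), it sets $n_0:=\sum_i\lambda_in_i/|\sum_i\lambda_in_i|$, picks any $y_0\in\Gamma_0^{-\delta}$ with outward normal $n_0$, and uses two global facts: the convexity of $\Omega_0^{-\delta}$, which gives $(y_i-y_0)\cdot n_i\geq 0$ for every $i$, and the optimality inequality $\psi^\delta(t_0,x_0,y_0)\leq w_\delta(t_0,x_0)$. Combining these algebraically yields the displayed inequality directly; the last step requires $w_\delta(t_0,x_0)\geq 0$, and \emph{this} --- not any smallness of $Y(t_0,x_0)$ --- is the actual reason the truncation at $-\delta$ is needed.
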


\begin{proof}
Recall that at time $t=0$,  $\Gamma_0$ is a smooth hypersurface,
and that the bounded set $\Omega _0$ delimited by $\Gamma _0$ is
convex. Hence, for $\delta
>0$ small enough, one can define a smooth hypersurface $\Gamma_0^{-\delta}$ as
$$\Gamma_0^{-\delta} := \{ x \in \R^N \ : \ d(0,x)=-\delta\} = \{ y - \delta n_y  \ : y \in \Gamma_0 \},$$
where $d (0,\cdot)$ denotes the signed distance to $\Gamma_0$,
which is negative in the bounded set $\Omega_0$, and positive in
$\R ^N \setminus \overline {\Omega _0}$. Notice also that, when $x\in\Omega _0$, we can write $v(0,x)=-\min _{y\in\Gamma _0} dist(x,H_y)$, where
 $H_y$ is the hyperplane going through $y$ and with normal vector $n_y$. As a result, the convexity assumption yields
$$
\Gamma_0^{-\delta} =\{ x \in \R^N \ : \ v(0,x) = -\delta \}.
$$
In particular, since the function $v$ is convex with respect to $x$, the bounded set
$\Omega_0^{-\delta}$ delimited by $\Gamma_0^{-\delta}$ is still
convex. Moreover, as we have chosen $\delta$ small enough so that $\Gamma_0^{-\delta}$ is smooth, it is straightforward that the outward unit normal vector of $\Gamma_0^{-\delta}$
at $y - \delta n_y$ is also $n_y$. Therefore, by some slight abuse
of notation, when $y \in \Gamma_0^{-\delta}$, $n_y$ will denote
the outward unit normal vector of $\Gamma_0^{-\delta}$ at point
$y$. Then
\begin{eqnarray*}
v_\delta (t,x) & = & \max \{ - \delta , \max_{y \in \Gamma_0} \{ (x-y).n_y - c^* (n_y) t \} \} \\
& = & \max \{ -\delta , \max_{y \in \Gamma_0 } \{ (x-(y - \delta n_y)). n_y - c^* (n_y) t - \delta \} \} \\
& = & \max \{ 0 , \max_{y\in \Gamma_0^{-\delta}} \{ (x - y) . n_y
- c^* (n_y) t\} \} - \delta.
\end{eqnarray*}
Therefore, $v_\delta (t,x)$ is a solution of $(P_{HJ}^0)$ if and
only if
$$\bar v_\delta (t,x) := \max \{0 , \max_{y \in \Gamma_0^{-\delta}} \{ (x-y).n_y - c^* (n_y) t \} \}$$
is. For convenience, denote
$$
\psi^\delta : (t,x,y)\in (0,\infty)\times \R ^N \times
\Gamma^{-\delta}_0 \mapsto (x-y).n_y -c^*(n_y)t,
$$
which is continuous with respect to $y\in \Gamma^{-\delta}_0$,
smooth and linear with respect to $t>0$ and $x\in \R ^N$, and
introduce also
$$w_\delta (t,x) := \max_{y \in \Gamma_0^{-\delta}} \ \psi^\delta (t,x,y),$$
so that $\bar v_\delta (t,x) = \max (0,w_\delta (t,x) )$.\\

Let us now prove that $\bar v_\delta$ is a solution of
\eqref{eq-HJ}. First, the null function and each function
$(t,x)\mapsto \psi^\delta (t,x,y)$ solve \eqref{eq-HJ} so that
$\bar v_\delta (t,x)$ --- as a supremum of solutions --- is a
viscosity subsolution of \eqref{eq-HJ}.

To prove that $\bar v_\delta (t,x)$ is also a supersolution, let
$\varphi$ be a smooth test function such that $\bar v_\delta
-\varphi$ has a zero local minimum at some point
$(t_0,x_0)\in(0,\infty)\times \R ^N$. We need to prove that
\begin{equation}\label{varphi}
\partial _t \varphi (t_0,x_0)+ |\nabla \varphi (t_0,x_0)|c^*\left(\frac{\nabla \varphi (t_0,x_0)}{|\nabla
\varphi (t_0,x_0)|}\right)\geq 0.
\end{equation}
If $w_\delta (t_0,x_0)< 0$, then $\bar v_\delta \equiv 0$ in a
neighborhood of $(t_0,x_0)$ and \eqref{varphi} is clear. Let us
now assume $0\leq  w_\delta (t_0,x_0)  =\bar v_\delta (t_0,x_0)$.
Since $\bar v_\delta -\varphi$ has a zero local minimum at
$(t_0,x_0)$, the time-space gradient of $\varphi$ at $(t_0,x_0)$
must belong to the time-space subdifferential of $\bar v_\delta$
at $(t_0,x_0)$, which is given by
$$
\partial \bar v_\delta (t_0,x_0)=
\begin{cases}
\partial w_\delta (t_0,x_0) & \text{ if  } w_\delta (t_0,x_0)>0 \\
\text{Co } \{(0_\R,0_{\R^N}) \cup \partial w_\delta (t_0,x_0)  \}
& \text{ if } w_\delta (t_0,x_0)=0,
\end{cases}
$$
where $\text{Co } A$ denotes the convex hull of the set $A$. It
also follows from \cite[Theorem 4.4.2]{Hir-Lem} that
$$
\partial w_\delta (t_0,x_0)=\text{Co } \{(\partial _t \psi^\delta (t_0,x_0,y),\nabla
_x\psi^\delta (t_0,x_0,y))=(-c^*(n_y),n_y)\in \R \times \R
^{N}:\,y\in Y(t_0,x_0)\},
$$
where $Y(t_0,x_0)$ is the set of $y\in \Gamma^{-\delta}_0$ that
maximize $\psi^\delta (t_0,x_0,\cdot)$. Hence, in any case, one
can write
$$
\partial _t \varphi (t_0,x_0)=\sum_{i=1}^p -\lambda _i
c^*(n_i),\quad \nabla \varphi (t_0,x_0)=\sum_{i=1}^p \lambda _i
n_i,
$$
for some $y_1$,...,$y_p$ in $Y(t_0,x_0)$, and $n_i$ the outward
unit normal of $\Gamma^{-\delta}_0$ at point $y_i$, and some
nonnegative $\lambda _1$,...,$\lambda _p$ such that $\sum _{i=1}^p
\lambda _i\leq 1$. Therefore our goal \eqref{varphi} is recast as
\begin{equation}\label{concavite-partielle}
c^*\left(\frac{\sum_{i=1}^p \lambda _i n_i}{|\sum_{i=1}^p \lambda
_i n_i|}\right)\geq \frac{\sum_{i=1}^p \lambda _i
c^*(n_i)}{|\sum_{i=1}^p \lambda _i n_i|}.
\end{equation}
Let us define
$$
n_0 :=\frac{\sum_{i=1}^p \lambda _i n_i}{|\sum_{i=1}^p \lambda _i
n_i|}\in \mathbb S ^{N-1},
$$
and pick a $y_0 \in \Gamma^{-\delta}_0$ such that $n_{y_0}= n_0$.
Note that such a $y_0$ necessarily exists from the smoothness of
the bounded hypersurface $\Gamma^{-\delta}_0$. One must then have
$$\psi^\delta (t_0,x_0,y_0)=(x_0-y_0).n_0-c^*(n_0)t_0\leq w_\delta (t_0,x_0),$$
so that
\begin{eqnarray*}
c^* (n_0) t_0 & \geq & (x_0 - y_0).n_0-w_\delta(t_0,x_0)\\
& = & \frac {\sum _{i=1}^p \lambda _i
(x_0-y_0).n_i}{|\sum_{i=1}^p \lambda _i n_i|}-w_\delta (t_0,x_0)\\
& \geq & \frac {\sum _{i=1}^p \lambda _i
(x_0-y_i).n_i}{|\sum_{i=1}^p \lambda _i n_i|}-w_\delta (t_0,x_0).
\end{eqnarray*}
Here we used the convexity of $\Omega _0 ^{-\delta}$, so that
$(y_i-y_0).n_i\geq 0$ for all $1\leq i \leq p$. Next, as each
$y_i$ belongs to $Y(t_0,x_0)$, we have $w_\delta
(t_0,x_0)=(x_0-y_i).n_i-c^*(n_i)t_0$, so that
$$
c^*(n_0)t_0 \geq \frac{\sum_{i=1}^p \lambda _i
c^*(n_i)}{|\sum_{i=1}^p \lambda _i n_i|}t_0+w_ \delta (t_0,x_0)
\left(\frac {\sum _{i=1}^p \lambda _i} {|\sum_{i=1}^p \lambda _i
n_i|}-1\right)\geq \frac{\sum_{i=1}^p \lambda _i
c^*(n_i)}{|\sum_{i=1}^p \lambda _i n_i|}t_0,
$$
since $w_\delta (t_0,x_0)\geq 0$ (notice that this is where it
fails if no cut-off is performed). This
proves~\eqref{concavite-partielle} and concludes the proof of
Proposition~\ref{prop:sol-HJ}.
\end{proof}

\subsection{Regularization of the motion}\label{ss:regular}

We now construct, by the vanishing viscosity method, a smooth
hypersurface $\Gamma^\alpha_t$ which approximates the interface
$\Gamma_t$ as $\alpha \to 0$. Moreover, the motion of this smooth
hypersurface is always \lq\lq slower'' than that of the original
interface $\Gamma_t$: this will allow us, in Section
\ref{s:propagation-below}, to construct subsolutions of $(P^\ep)$
which fully cover the bounded set delimited by $\Gamma^\alpha_t$.

\begin{prop}[Approximated smooth motion]\label{regularis}
Fix $\alpha_0>0$ small enough and, for any $0 < \alpha \leq \alpha_0$, let $F^\alpha: \R^N\to \R$ be a smooth
function such that
$$
0 \leq F^\alpha (p) \leq |p| (c^* (p/|p|) - \alpha), \quad \text{for all } p\in \R ^N,
$$
and, as $\alpha \to 0$,
$$
F^\alpha (p)\to |p| c^* (p/|p|), \quad \text{locally uniformly in } \R  ^{N}.
$$
Let $v^\alpha_0 (x)$ be a smooth and strictly convex function such
that
$$\|\nabla v^\alpha_0 \|_\infty \leq 1, \quad v_\delta (0,\cdot) + \alpha \leq v^\alpha_0 \leq v_\delta (0, \cdot) + 2 \alpha,$$
where $v_\delta$ is the explicit viscosity solution of
$(P^0_{HJ})$ with initial data $v_\delta(0,x)$, as defined  in
Proposition \ref{prop:sol-HJ}.

Then, the solution $v^\alpha$ of the parabolic equation
\[
\quad \begin{cases}
 \partial _t v^\alpha + F^\alpha (\nabla v^\alpha) - \alpha \Delta v^\alpha=0 &\text{in }(0,\infty)\times \R ^N  \vspace{5pt}\\
 v^\alpha (0,x)= v_0^\alpha (x) &\text{in }\R ^N,
 \end{cases}
\]
is smooth, convex w.r.t. space, and converges locally uniformly to
$v_\delta$ as $\alpha \to 0$. In particular, for any $T>0$ and up
to reducing $\alpha$, the zero level set $\Gamma^\alpha_t:=\{x\in
\R^N:\, v^\alpha(t,x)=0\}$ is a smooth hypersurface for any $0
\leq t \leq T$, and is such that
\begin{equation}\label{haussdorff}
\sup_{0 \leq t \leq T} d_\mathcal{H} (\Gamma^\alpha_t , \Gamma_t) \to 0 \quad \text{ as } \alpha \to 0,
\end{equation}
where $d_\mathcal H (A,B):=\max \{ \sup_{a \in A}
dist(a,B),\,\sup_{b\in B} dist(b,A)\}$ denotes the
Hausdorff distance between two compact sets $A$ and $B$. Last,
$v^\alpha$ satisfies
\begin{equation}\label{inegalite}
\partial_t v^\alpha + |\nabla
v^\alpha| \left(c^* \left(\frac{\nabla v ^\alpha}{| \nabla v
^\alpha|}\right) - \alpha\right) \geq 0\quad \text{ in
}(0,\infty)\times \R ^N.
\end{equation}
\end{prop}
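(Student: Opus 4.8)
The plan is to treat this as a standard vanishing-viscosity / parabolic regularization argument, broken into four pieces: (1) existence, smoothness and convexity of $v^\alpha$; (2) uniform-in-$\alpha$ estimates; (3) passage to the limit using the stability of viscosity solutions and the uniqueness for $(P^0_{HJ})$; and (4) the consequences for the zero level sets. First I would invoke classical parabolic theory: since $F^\alpha$ is smooth and the equation is uniformly parabolic (viscosity coefficient $\alpha>0$ fixed), with smooth initial data $v_0^\alpha$, there is a unique smooth solution $v^\alpha$, global in time because the nonlinearity is sublinear in $\nabla v^\alpha$ (the bound $0\le F^\alpha(p)\le |p|(c^*(p/|p|)-\alpha)$ gives at most linear growth, hence no gradient blow-up). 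For the gradient bound, differentiating the equation and applying the maximum principle to $|\nabla v^\alpha|^2$ propagates $\|\nabla v^\alpha(t,\cdot)\|_\infty \le \|\nabla v_0^\alpha\|_\infty \le 1$ for all $t$; this also keeps the argument $\nabla v^\alpha/|\nabla v^\alpha|$ of $c^*$ under control away from the set $\{\nabla v^\alpha = 0\}$, where we rely on the convention that $|p|c^*(p/|p|)$ extends continuously by $0$ at $p=0$. Convexity in space is preserved because the initial datum is strictly convex and the equation has the right structure: writing the equation for a directional second derivative $\partial_{ee}v^\alpha$ and checking that $F^\alpha$ enters in a way compatible with the maximum principle (this is where I would need $F^\alpha$ to be, say, convex — or more carefully, one uses that the level-set equation preserves convexity of sublevel sets under vanishing viscosity; alternatively one chooses $F^\alpha$ convex in the construction, which is permitted since only an upper bound and local uniform convergence are imposed).

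Next, for the convergence as $\alpha\to 0$, I would use the Barles–Perthame / Crandall–Lions stability method. Since $\|\nabla v^\alpha\|_\infty\le 1$ and $\partial_t v^\alpha$ is controlled by the equation (bounded in terms of $\|\nabla v^\alpha\|_\infty$, $\|F^\alpha\|$ on the relevant compact set of gradients, and $\|\Delta v^\alpha\|$ — the last of which needs care, so instead one works with the half-relaxed limits $\overline v = \limsup^* v^\alpha$ and $\underline v = \liminf_* v^\alpha$ directly without an equicontinuity-in-$t$ estimate). The functions $\overline v$ and $\underline v$ are respectively a viscosity sub- and supersolution of $\partial_t w + |\nabla w|c^*(\nabla w/|\nabla w|) = 0$: the $-\alpha\Delta v^\alpha$ term drops out in the standard way (it contributes $-\alpha D^2\varphi$ at a test point, which vanishes as $\alpha\to0$), and $F^\alpha(p)\to |p|c^*(p/|p|)$ locally uniformly. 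At $t=0$ the sandwich $v_\delta(0,\cdot)+\alpha \le v_0^\alpha \le v_\delta(0,\cdot)+2\alpha$ forces $\overline v(0,\cdot)=\underline v(0,\cdot)=v_\delta(0,\cdot)$. By the comparison principle for $(P^0_{HJ})$ — available thanks to Theorem \ref{th:continuity}, the continuity of $c^*$ — we get $\overline v \le \underline v$, hence equality, and both equal the unique viscosity solution, which by Proposition \ref{prop:sol-HJ} is exactly $v_\delta$. This gives local uniform convergence $v^\alpha \to v_\delta$.

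For the level-set conclusions: the Hausdorff convergence \eqref{haussdorff} follows from the uniform convergence $v^\alpha\to v_\delta$ on compact sets together with the fact, established in Proposition \ref{prop:motion?} (via the Hopf formula), that $\Gamma_t = \{v_\delta(t,\cdot)=0\}$ is ``sharp'' — it has no interior and is the boundary of the convex set $\Omega_t$ — which rules out the usual obstruction to Hausdorff convergence of level sets (a fattening/vanishing of the zero set). Concretely, since $v_\delta$ is convex in $x$, $0$ is not a degenerate level, so for $x$ slightly inside $\Omega_t$ one has $v_\delta<0$ and for $x$ slightly outside $v_\delta>0$; uniform convergence then traps $\Gamma^\alpha_t$ in a thin neighborhood of $\Gamma_t$ and conversely, uniformly in $t\in[0,T]$ — this is where one also uses that $v^\alpha$ is convex in space (so $\Gamma^\alpha_t$ is a genuine hypersurface bounding a convex set, and $\{v^\alpha(t,\cdot)<0\}$ is nonempty for $\alpha$ small, since $v^\alpha\to v_\delta$ which is negative somewhere). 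Smoothness of $\Gamma^\alpha_t$ on $[0,T]$ for $\alpha$ small follows from the implicit function theorem once one knows $\nabla v^\alpha\ne 0$ on $\{v^\alpha=0\}$: near $\Gamma_t$, $v_\delta = v$ is the ``max of linear functions'' whose graph is transverse to $0$ (the distance-like behaviour from the proof of Proposition \ref{prop:sol-HJ}), so $|\nabla v_\delta|$ is bounded below near its zero set, and uniform $C^1$-closeness of $v^\alpha$ (parabolic interior estimates upgrade the $C^0$ convergence to $C^1_{loc}$) transfers this to $v^\alpha$. Finally \eqref{inegalite} is immediate: from the PDE, $\partial_t v^\alpha = -F^\alpha(\nabla v^\alpha)+\alpha\Delta v^\alpha \ge -|\nabla v^\alpha|(c^*(\nabla v^\alpha/|\nabla v^\alpha|)-\alpha) + \alpha\Delta v^\alpha$, and $\Delta v^\alpha \ge 0$ by the space-convexity of $v^\alpha$, which yields precisely \eqref{inegalite}.

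The main obstacle I anticipate is item (3) combined with the level-set statement: propagating space-convexity of $v^\alpha$ through the regularized equation requires either choosing $F^\alpha$ convex (harmless, and I would do this) or a more delicate argument, and more importantly, obtaining the \emph{uniform-in-}$t$ Hausdorff convergence \eqref{haussdorff} genuinely uses the no-fattening property of $\Gamma_t$ from Proposition \ref{prop:motion?} — without the convexity of $\Omega_0$ this step would fail, which is exactly why Assumption \ref{H1}$(ii)$ is imposed. Everything else (parabolic existence/regularity, maximum-principle gradient bound, viscosity-solution stability, the trivial computation for \eqref{inegalite}) is routine.
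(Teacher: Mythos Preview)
Your overall architecture matches the paper's: gradient bound by differentiating once and applying the maximum principle, convexity by differentiating twice, vanishing-viscosity convergence via stability of viscosity solutions, and \eqref{inegalite} from $\Delta v^\alpha\ge 0$. Two points deserve correction or comment.

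\textbf{A genuine gap: the non-vanishing of $\nabla v^\alpha$ on $\Gamma^\alpha_t$.} You obtain this by claiming that parabolic interior estimates upgrade the $C^0$ convergence $v^\alpha\to v_\delta$ to $C^1_{loc}$, and then transfer the lower bound on $|\nabla v_\delta|$ near $\Gamma_t$ to $v^\alpha$. This step fails: the ellipticity constant of the regularized equation is $\alpha$, so Schauder/De Giorgi--Nash estimates degenerate as $\alpha\to 0$ and do \emph{not} give uniform $C^1$ control. The paper bypasses this entirely with a one-line convexity argument: if $\nabla v^\alpha(t_0,x_0)=0$ at some $x_0\in\Gamma^\alpha_{t_0}$, then $x_0$ is a global minimum of the convex function $v^\alpha(t_0,\cdot)$, whence $v^\alpha(t_0,\cdot)\ge 0$ on all of $\R^N$; but this contradicts the locally uniform convergence to $v_\delta(t_0,\cdot)$, which is strictly negative on the nonempty open set $\Omega_{t_0}$. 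This is the clean way to get smoothness of $\Gamma^\alpha_t$ for small $\alpha$, and it uses only $C^0$ convergence.

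\textbf{On convexity preservation.} Your instinct that something must be checked here is right, but the fix you propose (choose $F^\alpha$ convex) is the wrong sign: differentiating twice in a fixed direction $e$ gives $L(\partial_{ee}v^\alpha)=-D^2F^\alpha[\nabla\partial_e v^\alpha,\nabla\partial_e v^\alpha]$, which is $\ge 0$ when $F^\alpha$ is \emph{concave}. In fact no sign condition on $D^2F^\alpha$ is needed: working instead with the minimum eigenvalue $\lambda(t,x)$ of $D^2_x v^\alpha$, the offending term becomes $-D^2F^\alpha[D^2v^\alpha\,\xi,D^2v^\alpha\,\xi]$ with $\xi$ the minimizing eigenvector, and since $D^2v^\alpha\,\xi=\lambda\xi$ this term is $O(\lambda^2)$, hence harmless at a first contact point $\lambda=0$. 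The paper is equally terse on this point (``differentiate twice and use comparison''), but the result is correct without any structural hypothesis on $F^\alpha$.

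\textbf{A simplification you miss for \eqref{haussdorff}.} The paper observes that \eqref{inegalite}, together with $v^\alpha(0,\cdot)\ge v_\delta(0,\cdot)+\alpha$, makes $v^\alpha$ a strict supersolution of $(P^0_{HJ})$, hence $v^\alpha(t,\cdot)>v_\delta(t,\cdot)$ for all $t\ge 0$ by comparison. This immediately yields $\Gamma^\alpha_t\subset\Omega_t$, which combined with the local uniform convergence (giving $\{d(t,\cdot)\le -\beta\}\subset\Omega^\alpha_t$) sandwiches $\Gamma^\alpha_t$ in a $\beta$-tube around $\Gamma_t$ and gives both halves of the Hausdorff distance at once. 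Your route via no-fattening also works, but the strict ordering $v^\alpha>v_\delta$ is cleaner and is in any case used later in the paper (Section~\ref{s:propagation-below}).
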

\begin{proof}  One can differentiate (in any direction)
the parabolic equation satisfied by $v^\alpha$ and, using $\|
\nabla v^\alpha_0  \|_\infty \leq 1$ for any $0<\alpha \leq \alpha
_0$, deduce from the comparison principle that
$$\|\nabla v^\alpha (t,\cdot) \|_\infty \leq 1, \quad \text{ for all } 0<\alpha \leq \alpha _0, t>0.$$
In other words, the family $(v^\alpha (t,\cdot) )_{0 < \alpha\leq
\alpha _0, t\geq 0}$ is  uniformly Lipschitz-continuous. As
confirmed by \cite{Gig2}, the proof of Theorem~4.6.3 in~\cite{Gig}
still applies thanks to the above estimate, even though the
solutions we consider are unbounded. Therefore, one can conclude
that the family of functions $v^\alpha$ converges locally
uniformly to the unique viscosity solution of $(P^0_{HJ})$ with
initial datum $v_\delta (0,x)$, namely $v_\delta$.

We now proceed by noting that, for each $0<\alpha\leq \alpha _0$,
the smoothness of $v^\alpha$ follows from standard parabolic
estimates.  One can then differentiate the parabolic equation
twice in any given direction $e\in \mathbb S ^{N-1}$ and deduce
from the comparison principle (recall that $v_0^\alpha$ is convex)
that $v^\alpha (t, \cdot)$ is convex for any positive time. In particular, we have $\Delta v^\alpha
(t,x) \geq 0$ for all $t \geq 0$ and $x\in \R^N$, which proves
\eqref{inegalite}.

Let us now turn to the convergence of the zero level set $\Gamma^\alpha_t$ of $v^\alpha$ to $\Gamma _t$. The proof again follows the steps of \cite{Gig} (see the proof of Theorem~4.6.4 in the particular case of geometric motions). We fix any
$\beta >0$ and $T>0$ and show that, for small enough $\alpha$, $\sup_{0 \leq t \leq T} d_{\mathcal{H}} (\Gamma_t^\alpha , \Gamma_t) \leq \beta$. By \eqref{inegalite}, we get that $v^\alpha (t,x) > v_\delta (t,x)$ for all $t \geq 0$ and $x \in \R^N$: in particular, $\Gamma_t^\alpha \subset \Omega_t$ for all $t \geq 0$. Let now $R>0$ be large enough so that for all $0 \leq t \leq T$ the inclusion $\Omega_t \subset B_R$ holds, where $B_R$ denotes the ball of radius $R$ and centered at the origin. By the locally uniform convergence, it is clear that for any small enough $\alpha$ and $x \in \Omega_t$ such that $d(t,x) \leq - \beta$ (recall that $d(t,\cdot)$ denotes the signed distance to $\Gamma_t$), then $v^\alpha (t,x) <0$. The convergence \eqref{haussdorff} easily follows.

Let us again fix $T >0$ and now prove that, for
small enough $\alpha$, the zero level set $\Gamma_t^\alpha$ is a
smooth hypersurface on the time interval $[0,T]$. Note that, for
any $0 \leq t_0 \leq T$ and $x_0 \in \Gamma_{t_0}^\alpha$, one has
that $|\nabla v^\alpha (t_0, x_0)| \neq 0$ provided $\alpha$ is
small enough. Otherwise, it would follow from the convexity of
$v^\alpha (t_0,\cdot)$ that $v^\alpha (t_0, \cdot) \geq 0$ in
$\R^N$, a contradiction with the fact that it approaches $v_\delta
(t_0,\cdot)$ locally uniformly. Then, as $|\nabla v^\alpha
(t_0,x_0)| \neq 0$, one can apply the implicit function theorem
and obtain the smoothness of $\Gamma_{t_0}^\alpha$.
\end{proof}

\section{Rapid emergence of the layers from
below}\label{s:generation-below}

In this section we prove that, as $\ep \to 0$, the solution
$u^\ep(t,x)$ of $(P^\ep)$ is very close to $1$ in $\Omega _0$
after a very short time. The proof relies on the spreading
properties of solutions of \eqref{monostable} with large enough
compact support at initial time~\cite{Wein02}. Precisely, the
following holds.

\begin{prop}[Emergence of the layers from below]\label{prop:generation-below} Let the nonlinearity $f$ be
of the spatially periodic monostable type, i.e. $f$ satisfies
\eqref{periodicity} and Assumption~\ref{hyp:monostable}. Let the
initial data $g$ in Problem $(P^\ep)$ satisfy Assumption~\ref{H1}.

Then, for any small $\eta >0$ and small $\alpha >0$, there is a
time $t_\alpha >0$ such that the following holds: there is $\ep
_0>0$ such that, for all $\ep\in (0,\ep_0)$,
\begin{equation}\label{estimation}
x\in \Omega _0,\; dist(x,\partial\Omega_0 ) > \alpha \
\Longrightarrow \ 1 - \eta \leq u^\ep( t_\alpha \ep ,x) \leq 1.
\end{equation}
\end{prop}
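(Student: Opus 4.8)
The plan is to undo the hyperbolic rescaling and then invoke the spreading of solutions of \eqref{monostable} issued from large compactly supported data. First I would set $v(s,y):=u^\ep(\ep s,\ep y)$, which solves \eqref{monostable} on $(0,\infty)\times\R^N$ with $v(0,\cdot)=g(\ep\,\cdot)$; comparison with the steady state $1$ (legitimate since $g<1$) immediately gives $u^\ep\le 1$ throughout. Fixing $x\in\Omega_0$ with $dist(x,\partial\Omega_0)>\alpha$ and letting $y_0:=x/\ep$, the target estimate \eqref{estimation} is equivalent to $v(t_\alpha,y_0)\ge 1-\eta$, uniformly over such $x$, so it suffices to produce a time $t_\alpha$ and a threshold $\ep_0$ depending only on $\alpha,\eta$ for which this holds.

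Since $\widetilde g$ is continuous and positive on the compact set $K_\alpha:=\{z:\ dist(z,\partial\Omega_0)\ge\alpha/4\}\subset\Omega_0$, it is bounded below there by some $\mu=\mu(\alpha)>0$, and an elementary distance estimate shows $g(\ep y)\ge\mu$ whenever $y$ lies in the closed ball $B(y_0,\,3\alpha/(4\ep))$. Letting $w_\ep$ solve \eqref{monostable} with datum $\mu\,\mathds{1}_{B(y_0,\,3\alpha/(4\ep))}$, the comparison principle gives $v\ge w_\ep$. The genuine obstacle now is that, in the rescaled picture, the point $y_0$ recedes to infinity (it sits at distance $\sim 1/\ep$ from the origin), so one cannot conclude by continuous dependence on $\ep$ on a fixed compact set. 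I would bypass this using spatial periodicity: choose a lattice vector $k_\ep\in\prod_{i=1}^N L_i\mathbb Z$ with $z_\ep:=y_0-k_\ep$ in the cell of periodicity $Q$; by \eqref{periodicity}, $w_\ep(s,\cdot+k_\ep)$ again solves \eqref{monostable}, with datum bounded below by $\mu\,\mathds{1}_{B(0,\rho_\ep)}$ where $\rho_\ep:=3\alpha/(4\ep)-\mathrm{diam}(Q)\to+\infty$. Hence, writing $W_\rho$ for the solution of \eqref{monostable} with datum $\mu\,\mathds{1}_{B(0,\rho)}$ and using monotonicity of $W_\rho$ in $\rho$, we get $v(t_\alpha,y_0)\ge W_{\rho_\ep}(t_\alpha,z_\ep)$ with $z_\ep$ in the fixed bounded set $Q$.

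It remains to show that, for $\rho$ large, $W_\rho$ is as close to $1$ as we wish at a suitable fixed time, uniformly on $Q$: this is precisely the invasion behaviour of the periodic monostable equation from a large enough bounded support. By \cite{Wein02} there is $\rho_*$ (depending on $\mu$, hence on $\alpha$) such that $\sup_{|x|\le ct}(1-W_{\rho_*}(t,x))\to 0$ as $t\to\infty$ for every $c<\min_n c^*(n)$, the latter minimum being positive. Fixing $c_0:=\tfrac12\min_n c^*(n)>0$ and then $t_\alpha>0$ large enough that both $c_0 t_\alpha\ge\mathrm{diam}(Q)$ and $\sup_{|x|\le c_0 t_\alpha}(1-W_{\rho_*}(t_\alpha,x))\le\eta$, we obtain $W_{\rho_*}(t_\alpha,\cdot)\ge 1-\eta$ on $Q$. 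Choosing $\ep_0$ so small that $\rho_\ep>\rho_*$ for $\ep<\ep_0$ then yields $W_{\rho_\ep}(t_\alpha,z_\ep)\ge W_{\rho_*}(t_\alpha,z_\ep)\ge 1-\eta$, hence $v(t_\alpha,y_0)\ge 1-\eta$, and undoing the rescaling gives \eqref{estimation}. (One can avoid quoting the full spreading theorem: comparing $W_\rho$ with the solution $P$ of \eqref{monostable} with constant datum $\mu$ — which is space-periodic, time-nondecreasing since $\mu$ is a subsolution, and therefore converges uniformly to the only periodic steady state $\ge\mu$, namely $1$, by the dichotomy recalled after Assumption~\ref{hyp:monostable} together with Dini's theorem — one checks $W_\rho\uparrow P$ locally uniformly as $\rho\to\infty$ and concludes exactly as above.) The whole difficulty is thus geometric rather than analytic: reconciling a fixed observation time and a fixed target accuracy with a point that escapes to infinity in the rescaled picture, which the periodicity of \eqref{monostable} resolves.
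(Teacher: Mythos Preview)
Your proof is correct and follows essentially the same strategy as the paper: undo the rescaling, exploit spatial periodicity to shift the observation point back into a fixed compact set, and invoke Weinberger's spreading result for compactly supported data. The only notable difference is organizational: the paper places a ball of \emph{fixed} radius $R_{\sigma_1}$ (in the rescaled variables) at a lattice point chosen via a short geometric argument involving the curvature of $\Gamma_0$, whereas you place a ball of radius $\sim 1/\ep$ around $y_0$ and then reduce to a fixed radius $\rho_*$ by monotonicity in $\rho$. Your route sidesteps the curvature-dependent placement step at the price of the (harmless) extra monotonicity observation; both arrive at the same comparison. Your parenthetical alternative via the spatially constant subsolution $P$ is also valid and, as the paper itself remarks, the positive spreading speed in Weinberger's theorem is not actually needed here --- locally uniform convergence to $1$ suffices.
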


\begin{proof} Since $1$ solves the
reaction-diffusion equation in $(P^\ep)$ and since
$u^\ep(0,\cdot)=g(\cdot)\leq 1$, the comparison
principle implies $u^\ep(t,x)\leq 1$,
which  proves the upper bound in \eqref{estimation}. We next prove
the lower bound.

We begin by recalling the following result on the spreading of solutions with initial compact support
 \cite[Theorem 2.3] {Wein02}: for any $\sigma \in(0,1)$, there is $R_\sigma>0$ large enough so that the
  solution $v$ of \eqref{monostable} with initial datum $v_0 = \sigma \chi_{B_{R_\sigma}}$ converges locally
  uniformly to 1 as $t \to +\infty$. Here, $\chi$ denotes the characteristic function and $B_R$ the ball of radius
   $R$ and centered at the origin. Note that Weinberger's result~\cite{Wein02} also provides a positive spreading speed in
   any direction; however, it is not required to prove Proposition~\ref{prop:generation-below}.

Let us now fix some $\eta >0$ and $\alpha >0$. From Assumption
\ref{H1} on the initial data $g$, there is $\sigma_1\in(0,1)$ such that, for all $\ep >0$,
\begin{equation}\label{claimnew}
x\in \Omega _0,\; dist(x,\partial\Omega_0 ) >  \alpha
\ \Longrightarrow \ u^\ep(0,x)=g(x)\geq \sigma_1.
\end{equation}
We can now let $t_\alpha>0$ be such that the solution $v$ of \eqref{monostable} with initial datum $v_0 = \sigma_1 \chi_{B_{R_{\sigma_1}}}$ satisfies
\begin{equation}\label{pousse-vers-p}
v (t_\alpha , x ) \geq 1 - \eta, \quad \forall x \in B_{3R_{\sigma_1}}.
\end{equation}
We assume
without loss of generality that $R_{\sigma_1} > 2 \sqrt{N} \max _i L_i$.

Let us now fix $x^* \in \Omega _0$
such that $dist(x^*,\partial\Omega_0 ) > \alpha$. We are going to prove
\begin{equation}\label{goal}u^\ep(t_\alpha
\ep ,x^*)\geq 1 - \eta,
\end{equation}
for $\ep\in(0,\ep _0)$, where $\ep _0>0$ has to be independent on the point $x^*$ chosen as above.
We let $x_0 \in \partial \Omega_0$ such that $dist (x^* , \partial \Omega_0) = |x^*-x_0|$. Since $R_{\sigma_1} > 2 \sqrt{N} \max _i L_i$, there exists $k_\ep
^*=(k_{1,\ep}^*,..,k_{N,\ep}^*) \in \mathbb{Z}^N$ such that
\begin{equation}\label{boule}x^* - 2 R_{\sigma_1} \ep \frac{x_0 - x^*}{|x_0 - x^*|} \in\, \ep k_\ep ^*L +B_{\frac{R_{\sigma_1}\ep}{2}},
\end{equation}
where we denote $k_\ep ^*L:=(k_{1,\ep}^*L_1,..,k_{N,\ep}^*L_N)$.
Also, provided $\alpha$ and $\ep _0>0$ are small enough depending only on $0<\max _{y\in\Gamma _0} \gamma(y)<+\infty$ with $\gamma(y)$ the mean curvature (positive by convexity) of $\Gamma _0$ at point $y$,
 we have, for all $\ep\in(0,\ep _0)$,
\begin{equation}\label{eq:kZnnew}
\ep k_\ep ^*L \in \Omega_0 \; \text{ and } \; dist(\ep k_\ep^*L,
\partial \Omega_0) > \alpha +  R_{\sigma_1} \ep .
\end{equation}
Observe that if $x\notin \ep k_\ep ^* L+B_{\ep R_{\sigma_1}}$ then $v_0 \left(\frac{x-\ep k_\ep ^* L}{\ep} \right)
 = \sigma_1 \chi_{B_{R_{\sigma_1}}}\left(\frac{x-\ep k_\ep ^*L}{\ep}\right)=0$, and that if
$x\in\ep k_\ep ^* L+B_{\ep  R_{\sigma_1}}$ then \eqref{eq:kZnnew} implies that
$x\in \Omega _0$ and $dist(x,\partial\Omega_0 ) > \alpha$.
Hence, it follows from \eqref{claimnew} that
$$
g (x) \geq v \left(0,\frac{x-\ep k_\ep ^*L}{\ep}\right)
\; \text{ for all } x\in \R^N.
$$
Since $v (\frac{t}{\ep},\frac{x-\ep k_\ep^*L}{\ep})$ solves the
parabolic equation in $(P^\ep)$, the comparison principle implies
in particular that
$$
u^\ep(t_\alpha \ep,x^*)\geq
v \left(t_{\alpha},\frac{x^*-\ep k_\ep^*L}\ep\right).
$$
In view of \eqref{pousse-vers-p} and \eqref{boule}, the above
estimate implies \eqref{goal}. The
proposition is proved.
\end{proof}

The above argument also shows that, roughly speaking, the solution of $(P^\ep)$ may
only expand, which is rather natural from the dynamics of the monostable equation. Precisely the following holds.

\begin{lem}[Expansion]\label{remark42}
Let $\eta >0$ be given. Let $(\tilde \Omega _t)_{0\leq t\leq T}$ be a family of bounded and convex domains with smooth
boundaries $\tilde \Gamma _t:=\partial \tilde \Omega _t$. Then, for any $\sigma \in(0,1)$ there is a time $t_\sigma >0$
such that the following holds: there is $\ep _0>0$ --- depending only on
$0<\max _{0\leq t\leq T}\max _{y\in \tilde\Gamma _t} \gamma_t(y)<+\infty$ with $\gamma _t(y)$ the mean curvature of $\tilde \Gamma _t$
at point $y$--- such that, for any $0\leq t_0<T$, any $\ep \in(0,\ep _0)$,
$$
u^{\ep}(t_0,x) \geq \sigma,\quad \forall x\in \tilde \Omega _{t_0}\Longrightarrow
u^{\ep}(t,x) \geq 1-\eta, \quad \forall x\in \tilde \Omega _{t_0}, \forall t\geq t_0+t_\sigma \ep.
$$
\end{lem}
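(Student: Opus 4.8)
The plan is to reduce the statement to the spreading result already used in the proof of Proposition \ref{prop:generation-below}, with the extra bookkeeping needed to handle (a) a moving family of domains rather than a fixed $\Omega_0$, and (b) the requirement that the time $t_\sigma$ and the threshold $\ep_0$ depend only on a uniform bound on the mean curvature. First I would fix $\sigma\in(0,1)$ and recall from \cite[Theorem 2.3]{Wein02} that there is $R_\sigma>0$ such that the solution $v$ of \eqref{monostable} with initial datum $\sigma\chi_{B_{R_\sigma}}$ tends to $1$ locally uniformly; hence one can pick $t_\sigma>0$ with $v(t_\sigma,x)\geq 1-\eta$ for all $x\in B_{3R_\sigma}$, exactly as in \eqref{pousse-vers-p}. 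We may assume $R_\sigma>2\sqrt N\max_i L_i$.

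Next I would fix $0\le t_0<T$ and a point $x^*\in\tilde\Omega_{t_0}$, and distinguish two cases according to $dist(x^*,\tilde\Gamma_{t_0})$. If the distance is larger than some $\alpha_0>0$ (to be chosen depending only on the curvature bound), the argument is a verbatim copy of the one in Proposition \ref{prop:generation-below}: using the curvature bound one finds $k^*_\ep\in\mathbb Z^N$ with $\ep k^*_\ep L\in\tilde\Omega_{t_0}$, $dist(\ep k^*_\ep L,\tilde\Gamma_{t_0})>\alpha_0+R_\sigma\ep$, and $x^*-2R_\sigma\ep\,\frac{x_0-x^*}{|x_0-x^*|}\in \ep k^*_\ep L+B_{R_\sigma\ep/2}$ where $x_0$ realizes the distance; the hypothesis $u^\ep(t_0,\cdot)\ge\sigma$ on $\tilde\Omega_{t_0}$ dominates the rescaled translate $v(0,(\cdot-\ep k^*_\ep L)/\ep)$, and the comparison principle together with \eqref{pousse-vers-p} gives $u^\ep(t,x^*)\ge 1-\eta$ for $t\ge t_0+t_\sigma\ep$. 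The case $dist(x^*,\tilde\Gamma_{t_0})\le\alpha_0$ near the moving boundary is handled by the same trick but anchoring the ball slightly inside: since the $\tilde\Gamma_{t_0}$ are convex with uniformly bounded curvature, for $\ep$ small one can still place a lattice point $\ep k^*_\ep L$ inside $\tilde\Omega_{t_0}$ at distance $\gtrsim R_\sigma\ep$ from $\tilde\Gamma_{t_0}$ so that the full rescaled ball $\ep k^*_\ep L+B_{\ep R_\sigma}$ stays in $\{u^\ep(t_0,\cdot)\ge\sigma\}=\tilde\Omega_{t_0}$, while remaining within distance $3R_\sigma\ep$ of $x^*$; then \eqref{pousse-vers-p} again applies.

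The only genuinely delicate point is the uniformity: $\ep_0$ must be chosen so that the lattice-point constructions above work simultaneously for every $t_0\in[0,T)$ and every admissible $x^*$, and this is precisely where the uniform curvature bound $0<\max_{0\le t\le T}\max_{y\in\tilde\Gamma_t}\gamma_t(y)<+\infty$ enters: it gives, uniformly in $t_0$, an interior ball condition for $\tilde\Omega_{t_0}$ of a fixed radius, so that a ball of radius $R_\sigma\ep$ can be fitted inside near any boundary point once $\ep$ is small. With $t_\sigma$ and $\ep_0$ so fixed (note they do not depend on $t_0$ since \eqref{monostable} is autonomous in time), the comparison argument closes for every $t\ge t_0+t_\sigma\ep$, which is the assertion. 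I expect this curvature-uniformity step to be the main obstacle; everything else is a direct transcription of the proof of Proposition \ref{prop:generation-below}.
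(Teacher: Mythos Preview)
Your proposal is correct and is precisely the approach the paper intends: the authors give no separate proof of Lemma~\ref{remark42} but simply write ``the above argument also shows\dots'', referring to the proof of Proposition~\ref{prop:generation-below}, and your plan is a faithful elaboration of that argument with the curvature bound supplying the uniform interior-ball condition. Two minor remarks: the case distinction on $dist(x^*,\tilde\Gamma_{t_0})$ is unnecessary since in Lemma~\ref{remark42} the hypothesis $u^\ep(t_0,\cdot)\ge\sigma$ is assumed on all of $\tilde\Omega_{t_0}$ (so one can take $\alpha=0$ in the construction of Proposition~\ref{prop:generation-below}), and for the conclusion to hold for \emph{all} $t\ge t_0+t_\sigma\ep$ you should strengthen \eqref{pousse-vers-p} to $v(t,x)\ge 1-\eta$ for every $t\ge t_\sigma$ and $x\in B_{3R_\sigma}$, which is immediate from the locally uniform convergence in Weinberger's result.
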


\section{Propagation of the layers from below}\label{s:propagation-below}

We now begin the analysis of the motion of interface. In this
section, we prove the lower estimate on the motion of level sets
of the solutions $u^\ep(t,x)$, namely statement~$(i)$ of Theorem~\ref{THEO-conv}.

To that purpose, we fix some times $0 < \tau < T $, and a small $\beta >0$. We then let $\alpha
>0$ be small enough so that the hypersurfaces $(\Gamma^\alpha_t)_{0\leq t \leq T +1}$, as defined in subsection~\ref{ss:regular}, are smooth and such that
\begin{equation}\label{etoileetoile}\sup_{0 \leq t \leq T +1} d_{\mathcal{H}} (\Gamma^\alpha_t , \Gamma_t) \leq \frac{\beta}{2}.
\end{equation}
We also denote, in this section, by $\Omega_t^{\alpha}$ the region enclosed by $\Gamma ^\alpha_t$.

\subsection{Lower estimates in small canisters}\label{ss:sub}

We start by looking, for any fixed time $t_0$, at the \lq\lq local
motion" of the interface. By \lq\lq local motion", we mean that we
will investigate the motion of the solution on small neighborhoods
of any point of $\Gamma^\alpha_{t_0}$. Precisely, the following holds.

\begin{lem}[Lower estimates in small canisters]\label{lem:pointwise}
Let $\eta >0$ be given.  Fix some time $t_0 \in (0,T)$, and assume
that
\begin{equation}\label{assumption}
x \in \Omega_{t_0}^{\alpha} \Longrightarrow u^\ep (t_0,x)
\geq 1 - \eta.
\end{equation}
Then there are two positive constants $A_1$ and $A_2$, independent
on $t_0$ and $\ep>0$ (provided it is small enough), such that
$$
u^\ep (t_0 + A_1 \sqrt{\varepsilon}, x) \geq 1 - \eta,
$$
for all $x \in D :=\cup_{x_0 \in \Gamma ^\alpha _{t_0}} \mathcal C
(x_0)$, where $\mathcal C (x_0)$ is the finite cylinder, or canister,
made of the points~$x$ such that
\begin{equation}\label{etoile}
|(x-x_0) \cdot n | \leq A_1 \left(c^* (n) - \frac{\alpha}{2}
\right) \sqrt{\varepsilon}\; \text{ and }\; |(x-x_0) \cdot
n^{\bot}| \leq A_2 \frac{\sqrt{\varepsilon}}{2}.
\end{equation}
Here $n$ denotes the unit outer normal of $\Gamma^\alpha_{t_0}$ at
point $x_0$, and $(x-x_0) \cdot n^\bot$ denotes the orthogonal
projection of $x-x_0$ on the hyperplane  $(\mathbb R n)^\bot$.
\end{lem}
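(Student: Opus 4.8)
The plan is to prove Lemma~\ref{lem:pointwise} by constructing, for each fixed $x_0 \in \Gamma^\alpha_{t_0}$, a compactly supported subsolution of $(P^\ep)$ at time $t_0$ which is supported on a fattened canister that sits inside $\Omega_{t_0}^\alpha$, and then invoke the uniform spreading property of Theorem~\ref{th:unif_spreading} to push this subsolution up to $1-\eta$ throughout the thin canister $\mathcal C(x_0)$ after the short time $A_1\sqrt\ep$. The key point is that, after the rescaling $v(t,x) := u^\ep(t_0 + \ep t, x_0 + \ep x)$, the function $v$ solves the unscaled equation~\eqref{monostable}, so that on the $\ep^{-1}$-dilated picture we are simply looking at spreading of \eqref{monostable} over a time of order $A_1/\sqrt\ep \to +\infty$, which is long enough for the spreading phenomenon to take effect and for the front to travel a distance of order $A_1/\sqrt\ep$ — exactly the half-length $A_1(c^*(n) - \alpha/2)/\sqrt\ep \cdot \ep^{-1}$... more precisely, in rescaled variables the canister has half-length $A_1(c^*(n)-\alpha/2)/\sqrt\ep$ in the $n$-direction, and spreading at speed close to $c^*(n)$ over time $A_1/\sqrt\ep$ covers it since $c^*(n) \cdot A_1/\sqrt\ep > A_1(c^*(n)-\alpha/2)/\sqrt\ep$.

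The construction I would carry out in order: first, using the smoothness (hence bounded curvature) of $\Gamma^\alpha_{t_0}$ and the fact that this bound is uniform over $0 \le t_0 \le T+1$ (Proposition~\ref{regularis}), fix $A_2>0$ and then choose $A_1>0$ large; for $\ep$ small enough the fattened canister
$$\widehat{\mathcal C}(x_0) := \{ x : \; |(x-x_0)\cdot n| \le A_1(c^*(n)-\alpha/2)\sqrt\ep, \; |(x-x_0)\cdot n^\bot| \le A_2\sqrt\ep \}$$
shifted slightly inward along $-n$ is contained in $\Omega_{t_0}^\alpha$ — here the geometry uses that the portion of $\Gamma^\alpha_{t_0}$ near $x_0$ lies, up to curvature corrections of order $(\text{diam})^2 = O(\ep)$, on one side of the tangent hyperplane, while the canister half-widths are only $O(\sqrt\ep)$, so the quadratic error is negligible compared to the linear size — this is the standard "canister fits inside a smooth convex domain" argument. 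Second, by hypothesis~\eqref{assumption}, $u^\ep(t_0,\cdot) \ge 1-\eta$ on this inward-shifted fattened canister, hence $u^\ep(t_0,\cdot)$ dominates at time $t_0$ a family of initial data $u_{0,n}$ (indexed by the direction $n = n(x_0)$, $x_0$ ranging over $\Gamma^\alpha_{t_0}$) of the planar type appearing in Theorem~\ref{th:unif_spreading}: cut off at the far face, bounded below by a constant on the near half, and bounded away from $1$. Third, apply Theorem~\ref{th:unif_spreading} with its $\alpha$ taken to be $\alpha/2$ (or slightly less) and its $\eta$ the given $\eta$: there is $\tau_\eta$ such that for all $t \ge \tau_\eta$ the solution $u_n$ of \eqref{monostable} is $\ge 1-\eta$ on $\{x\cdot n \le (c^*(n) - \alpha/2)t\}$. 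Taking $t = A_1/\sqrt\ep \ge \tau_\eta$ (true for $\ep$ small, for any fixed $A_1$) and undoing the rescaling gives $u^\ep(t_0 + A_1\sqrt\ep, x) \ge 1-\eta$ for all $x$ with $(x - x_0)\cdot n \le (c^*(n)-\alpha/2) A_1 \sqrt\ep$ and $(x-x_0)\cdot n^\bot$ small, which contains $\mathcal C(x_0)$. One must be a little careful that Theorem~\ref{th:unif_spreading} controls only the region behind the front in the $+n$-direction, i.e. $\{x\cdot n \le \cdots\}$, but the canister $\mathcal C(x_0)$ sits in $|(x-x_0)\cdot n| \le A_1(c^*(n)-\alpha/2)\sqrt\ep$, including the part with $(x-x_0)\cdot n < 0$ which is already inside $\Omega_{t_0}^\alpha$ up to $O(\ep)$; for that part one uses instead the already-established lower bound at time $t_0$ together with Lemma~\ref{remark42} (expansion), or simply notes the comparison subsolution is itself bounded below there.

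The main obstacle I expect is the geometric bookkeeping that makes the canister fit inside $\Omega_{t_0}^\alpha$ uniformly in $t_0$ and in $x_0 \in \Gamma^\alpha_{t_0}$, and the simultaneous choice of $A_1$, $A_2$, and the threshold $\ep_0$ so that none of these depends on $t_0$ — this is exactly where the uniform curvature bound from Proposition~\ref{regularis} and the uniformity (over directions $n$) in Theorem~\ref{th:unif_spreading} are both essential. The second delicate point is consistency of the two small parameters called $\alpha$: the regularized motion is built with speed $c^*(n) - \alpha$, the canister is stated with $c^*(n) - \alpha/2$, and the spreading theorem is applied with a spreading defect strictly between $0$ and $\alpha/2$; one needs these inequalities to be strict and compatible so that "spreading reaches the end of the canister" holds with room to spare, which is where the factor-$2$ gap in~\eqref{etoile} is spent. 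Once these are set up, applying Theorem~\ref{th:unif_spreading} is immediate and the rescaling $t \mapsto \ep t$, $x \mapsto \ep x$ converts a spreading statement over a long time into the claimed statement over the short time $A_1\sqrt\ep$.
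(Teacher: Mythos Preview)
There is a genuine gap. You propose to apply Theorem~\ref{th:unif_spreading} directly to the solution of~\eqref{monostable} started from a box-shaped (compactly supported) initial datum---your ``fattened canister''. But the hypotheses of Theorem~\ref{th:unif_spreading} require the initial data $u_{0,n}$ to be bounded below by some $\mu>0$ on an entire half-space $\{x\cdot n\le -K\}$; a datum supported on a box of side length $O(1/\sqrt\ep)$ (in rescaled variables) does not satisfy this, and the theorem simply does not apply. In fact the paper stresses that a compactly supported solution of~\eqref{monostable} does \emph{not} spread with speed $c^*(n)$ in the $n$-direction, but rather with the possibly smaller speed $\min_{n'} c^*(n')/(n\cdot n')$; so the conclusion you want is false for box data in general, and one must exploit that the time window is only $O(1/\sqrt\ep)$, not $t\to+\infty$.

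The paper's proof supplies exactly the missing ingredient. It does not use your $O(\sqrt\ep)$ canister as the subsolution support; instead it uses the full inscribed ball $B_{1/\gamma}(x_0-n/\gamma)\subset\Omega^\alpha_{t_0}$, which after rescaling has radius $O(1/\ep)$, and inside it a box $D_1$ that is \emph{long} ($\sim 1/\ep$) in the $n$-direction but only $\sim 1/\sqrt\ep$ wide transversally. It then compares the solution $\underline u$ with this box datum to the solution $\tilde u$ with genuinely planar datum (to which Theorem~\ref{th:unif_spreading} does apply), and bounds the error $w=\tilde u-\underline u$ face by face via explicit exponential supersolutions of the linear inequality $\partial_t w-\Delta w-Kw\le 0$. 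The constant $A_1$ is then \emph{not} a free large parameter as you suggest but is fixed as $A_1=\frac14\sqrt{C/(K(N-1)\gamma)}$ precisely so that over time $A_1/\sqrt\ep$ the leakage from each transverse face (at distance $\sim 1/\sqrt\ep$) stays below $\eta/2$, while the back $n$-face (at distance $\sim 1/\ep$) contributes nothing. Your ordering ``fix $A_2$, then choose $A_1$ large'' is therefore backwards: one needs $A_1$ small relative to $A_2$ to control leakage, and largeness of the effective time $A_1/\sqrt\ep\ge\tau$ comes only from $\ep\to 0$.
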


\begin{proof}
First, let $\gamma>0$ be large enough so that, for all
$t\in[0,T]$, all $y\in \Gamma ^\alpha _t$ with $n_y$ the associated
unit outer normal, we have the inclusion
\begin{equation}\label{truc}
B_{\frac{1}{\gamma}} \left( y - \frac{1}{\gamma} n_y\right)
\subset \Omega_{t}^{\alpha},
\end{equation}
where $B_r(z)$ denotes the open ball of center $z$, radius $r$. By convexity, it suffices to take $\gamma$ as the
maximal curvature (in absolute value) of $\Gamma^\alpha_{t}$ in the
time interval $[0,T]$.

Let $\eta >0$ and $0<t_0<T$ be given. Let $x_0 \in \Gamma ^\alpha
_{t_0}$ be given and $n$ the associated unit outer normal. For the
lemma to be proved notice that constants $A_1$ and $A_2$, that we
need to determine, have to be independent on $t_0$, small $\ep
>0$ but also on $x_0$ and $n$. By assumption \eqref{assumption}
and inclusion \eqref{truc}, we have
$$
\forall x\in B_{\frac{1}{\gamma}} \left( x_0 - \frac{1}{\gamma}
n\right), \quad u^\ep (t_0,x) \geq 1 - \eta.
$$

We fix a constant $C>2\sqrt N \max _i L_i$ and, proceeding
similarly as in Section \ref{s:generation-below}, we can find some
point $\ep k_\ep L := \ep (k_{1,\ep} L_1, ...,k_{N,\ep} L_N)$,
where $k_{i,\ep} \in \mathbb{Z}$ for all $1 \leq i \leq N$, and
such that
\begin{equation}\label{eqn:epKL}
x_0 - \frac{n}{\gamma} \in \ep k_\ep L + B_{C \ep}.
\end{equation}
Then
\begin{equation}\label{comparison_uep}
\forall x\in B_{\frac{1}{\gamma} - C\ep} \left( \ep k_\ep
L\right), \quad u^\ep (t_0,x) \geq 1 - \eta.
\end{equation}

This leads us to study the solution $u(t,x)$ of \eqref{monostable}
with initial datum
\begin{equation}\label{blabla}
u_0 (x) := (1 -\eta) \times \chi_{B_{\frac{1}{\gamma \ep} - C}}
(x),
\end{equation}
where $B_r$ denotes the open ball centered at the origin and of
radius $r$. Note that this initial datum has compact support, so
that Theorem~\ref{th:unif_spreading} does not apply. In fact, the
solution $u (t,x)$ does not spread with speed $c^* (n)$ in the
$n$-direction as $t \to +\infty$, but rather with some minimum of
the $\frac{c^* (n')}{n \cdot n'}$ over all $n' \in
\mathbb{S}^{N-1}$. However, as the radius of the initial support
is very large, we can exhibit some transient dynamics where the
solution does spread, in any direction $n$, with speed $c^* (n)$
the minimal speed of pulsating traveling waves. Let us make this
sketch precise.

We first note that, provided that $\ep$ is small depending only on
$C$ and $\gamma$, the finite cylinder
$$
D_0:=\left\{ x\in \R^N \; : \ |  x \cdot n | \leq \frac{1}{\gamma
\ep} - 2C  \ \text{and} \ |x \cdot n^\bot | \leq
\sqrt{\frac{C}{\gamma \ep}} \right\}
$$
is a subset of $B_{\frac{1}{\gamma
\ep} - C}$ thanks to Pythagoras' theorem.  In order to
apply Theorem \ref{th:unif_spreading}, which is concerned with
planar-shaped initial data, it is more convenient to consider a
box-shaped initial support. With this in mind, we introduce $(n_1
, ... , n_{N-1})$ an orthonormalized basis of $(\mathbb R
n)^\bot$, and define the finite box
$$D_1 := \left\{ x \in \R^N \; : \ | x \cdot n | \leq \frac{1}{\gamma \ep} - 2C \
\text{and } \ \forall 1\leq i \leq N-1 , \ | x \cdot n_i | \leq
\sqrt{\frac{C}{(N-1) \gamma \ep}}  \right\},
$$
which is a subset of $D_0$.

We can now begin our investigation of the spreading of $u$, the
solution of \eqref{monostable} with initial datum \eqref{blabla}.
By the parabolic comparison principle, we have
\begin{equation*}%\label{parabolic_pointwise1}
u \geq \underline{u}\,,
\end{equation*}
where $\underline{u}$ is the solution of \eqref{monostable} with
initial datum
$$
\underline{u}_0 (x) := (1 -\eta) \times \chi_{D_1} (x).
$$

We let $\tilde{u} (t,x;n)$ denote the solution of \eqref{monostable} with
initial datum
$$
\tilde{u}_0 (x;n) := (1-\eta) \times \chi_{\{ x \cdot n \leq
\frac{1}{\gamma \ep} - 2C \} } (x),
$$
which is planar-shaped so that $\tilde{u} (t,x;n)$ spreads in the
direction $n$ with speed $c^* (n)$. Precisely, recalling $C>2\sqrt
N \max _i L_i$, we can find some point $\tilde k_\ep L := (\tilde
k_{1,\ep} L_1, ...,\tilde k_{N,\ep} L_N)$, where $\tilde k_{i,\ep}
\in \mathbb{Z}$ for all $1 \leq i \leq N$, and such that
$\frac{n}{\gamma \ep} \in B_C (\tilde k_\ep L)$. Then observe that
$$
\tilde{v}_0(x;n):=\tilde{u}_0 (x + \tilde k_\ep L ;n) \geq (1 -
\eta) \times \chi_{\{ x \cdot n \leq - 3C \} }(x).
$$
We can now apply Theorem \ref{th:unif_spreading} with the family
of functions in the right-hand side member above (which do not
depend on  $\ep$) as the family of initial data. Then, applying
the comparison principle, we get that there exists $\tau
>0$ (which does not depend on $\ep$) such that
$$
\inf_{t \geq \tau }\; \inf_{x \cdot n \leq (c^* (n)-\frac 14
\alpha) t}\; \tilde{v} (t,x;n) \geq 1 - \frac{\eta}{2},
$$
where $\tilde v (t,x;n)$ denotes the solution of \eqref{monostable} with initial datum $\tilde v _0(x;n)$. Then, since $\tilde{v}(t,x;n)=\tilde{u}(t,x+\tilde k_\ep L;n)$
thanks to the spatial periodicity, the above estimate implies
\begin{equation}\label{wein_spread0}
\inf_{t \geq \tau }\; \inf_{x \cdot n \leq \frac{1}{\gamma \ep} -
3C+(c^* (n)-\frac 14 \alpha) t}\; \tilde{u} (t,x;n) \geq 1 -
\frac{\eta}{2}.
\end{equation}
We emphasize that $\tau
>0$ can also be chosen independently of $n\in \mathbb S ^{N-1}$:
this is the exact purpose of our improvement of Weinberger's
spreading result \cite{Wein02}, namely Theorem
\ref{th:unif_spreading}.

We now estimate the difference $w := \tilde{u} - \underline{u}
\geq 0$, which satisfies $\partial_t w - \Delta w - g(t,x) w = 0$,
where
$$g(t,x):= \begin{cases}
\displaystyle \frac{f(x,\tilde{u}) - f (x,\underline{u})}{\tilde{u} - \underline{u}}  &\mbox{ if } \ w(t,x) \neq 0,\vspace{3pt}\\
\displaystyle  \partial_u f(x,\tilde{u}) & \mbox{ if } \ w(t,x)
=0.
\end{cases}
$$
From Assumption \ref{hyp:monostable}, $g(t,x)$ is uniformly
bounded by some $K$ which only depends on $f$. Then $w$ satisfies
\begin{equation}\label{eqn:linear_1}
\partial_t w - \Delta w - K w \leq 0.
\end{equation}
As this parabolic equation is linear, we infer that $w(t,x) \leq
\sum _{i=0}^{2N-2}w_i(t,x)$, where $w_0$ is the solution of
\eqref{eqn:linear_1} with initial datum
$$
w_0 (0,x)=\begin{cases}
\displaystyle 1 - \eta & \mbox{ if } x \cdot n \leq -\frac{1}{\gamma \varepsilon} +2C ,\vspace{3pt}\\
\displaystyle 0 &\mbox{ otherwise},
\end{cases}
$$
and the $w_i$'s, $1\leq i\leq N-1$, are the solutions of
\eqref{eqn:linear_1} with initial data
$$w_{2i-1} (0,x)=\begin{cases}
\displaystyle 1 - \eta & \mbox{ if } x \cdot n \leq
\frac{1}{\gamma \varepsilon}
-2C \mbox{ and } x \cdot n_i \geq \sqrt{\frac{C}{(N-1) \gamma \ep}} ,\vspace{3pt}\\
\displaystyle 0 &\mbox{ otherwise},
\end{cases}
$$
$$
w_{2i} (0,x)=\begin{cases} \displaystyle 1 - \eta & \mbox{ if } x
\cdot n \leq \frac{1}{\gamma \varepsilon}
 -2C \mbox{ and } x \cdot n_i \leq -\sqrt{\frac{C}{(N-1) \gamma \ep}}  ,\vspace{3pt}\\
\displaystyle 0 &\mbox{ otherwise}.
\end{cases}
$$
Note that, for any $e \in \mathbb{S}^{N-1}$ and any positive
constant $M$, $(t,x)\mapsto M e^{- \sqrt{K} (x \cdot e -
2\sqrt{K}t)}$ is a supersolution of the linear equation
\eqref{eqn:linear_1}. It therefore follows that
$$w_0 \left( t,x \right) \leq e^{-\sqrt{K} (x \cdot n + \frac{1}{\gamma \ep} - 2C- 2\sqrt{K}t )},$$
and, for any integer $1 \leq i \leq N-1$,
$$w_{2i-1} \left(t,x \right) \leq  e^{\sqrt{K} (x \cdot n_i  -\sqrt{\frac{C}{(N-1) \gamma \ep}}+ 2\sqrt{K}t )}, $$
$$w_{2i} \left(t,x \right) \leq  e^{-\sqrt{K} (x \cdot n_i  + \sqrt{\frac{C}{(N-1) \gamma \ep}}- 2\sqrt{K}t )}. $$
Then, we conclude that
\begin{equation}\label{eq_planarclose}
0\leq (\tilde{u} - \underline{u})\left( \frac{A_1}{\sqrt{\ep}}, x
\right) = w \left( \frac{A_1}{\sqrt{\ep}}, x \right) \leq
\sum_{i=0}^{2N-2} w_i \left( \frac{A_1}{\sqrt{\ep}} , x
\right)\leq \frac{\eta}{2},
\end{equation}
where
$$A_1:= \frac{1}{4} \sqrt{\frac{C}{K(N-1)\gamma }},$$
for all $x$ satisfying the two following inequalities:
\begin{equation}\label{ineq_11}
x \cdot n \geq -\frac{1}{\gamma \ep} + 2C + 2 A_1
\sqrt{\frac{K}{\ep} } - \frac{1}{\sqrt{K}} \ln
\left(\frac{\eta}{4N  } \right) = -\frac{1}{\gamma
\ep} + O \left( \frac{1}{\sqrt{\ep}}\right),
\end{equation}
$$|x \cdot n_i | \leq  \sqrt{\frac{C}{(N-1) \gamma \ep}} - 2 A_1 \sqrt{\frac{K}{\ep} }  + \frac{1}{\sqrt{K}}
\ln \left(\frac{\eta}{4N  } \right) = \frac{1}{2}
\sqrt{\frac{C}{(N-1) \gamma \ep}}  + O(1),$$ for $1 \leq i \leq
N-1$. The second inequality is in particular satisfied, for $\ep
>0$ small enough, if
\begin{equation}\label{ineq_12}
| x \cdot n^\bot | \leq \frac{1}{3} \sqrt{\frac{C}{(N-1)\gamma
\ep}}=:\frac{A_2}{\sqrt{\ep}}.
\end{equation}
Combining the spreading property~\eqref{wein_spread0} of
$\tilde{u}$ and inequality~\eqref{eq_planarclose}, we conclude
that
\begin{equation}\label{bidule}
\underline{u} (t_\ep, x) \geq 1 - \eta,\quad t_\ep:=\frac
{A_1}{\sqrt \ep},
\end{equation}
for any $x$ satisfying both inequalities \eqref{ineq_11} and
\eqref{ineq_12}, as well as
\begin{equation}\label{ineq_13}
x \cdot n \leq \frac{1}{\gamma \ep} - 3C + (c^* (n)-\frac 14
\alpha) t_\ep.
\end{equation}

We can now go back to our original problem $(P^\ep)$. Notice that
both $\underline u(\frac t \ep, \frac x \ep)$ and $u^\ep(t_0+t,\ep
k_\ep L+x)$ solve the equation in $(P^\ep)$. Using $D_1 \subset
B_{\frac 1{\gamma \ep}-C}$ and \eqref{comparison_uep}, we see that
$\underline u(0,\frac x\ep)\leq u^\ep(t_0,\ep k_\ep L+x)$ so that
$$
u^\ep (t_0 +t,\ep k_\ep L +x  ) \geq \underline{u} \left(\frac t
\ep, \frac x \ep\right),
$$
where $\ep k_\ep L$ satisfies \eqref{eqn:epKL}. Thus, we get
\begin{equation}\label{point}
u^\ep (t_0 + \ep t_\ep , x) \geq \underline u\left
(t_\ep,\frac{x-\ep k_\ep L}\ep\right)\geq 1 - \eta,
\end{equation}
provided that $\frac{x - \ep k_\ep L}\ep$ satisfies
\eqref{ineq_11}, \eqref{ineq_12}, \eqref{ineq_13} (so that
\eqref{bidule} holds). Now, assume that $x$ satisfies
\eqref{etoile}. Combining the first part of \eqref{etoile} and
\eqref{eqn:epKL}, we see that $\frac{x - \ep k_\ep L}\ep$
satisfies both \eqref{ineq_11} and \eqref{ineq_13}. Combining the
second part of \eqref{etoile}, $n\cdot n^\bot=0$ and
\eqref{eqn:epKL}, we see that $\frac{x - \ep k_\ep L}\ep$
satisfies \eqref{ineq_12}. Hence, \eqref{point} holds true and is
the desired conclusion that $u^\ep(t_0+A_1\sqrt \ep,x)\geq 1
-\eta$.

Note that, as announced, the constants $A_1$ and $A_2$ defined
above  depend neither on $t_0\in(0,T)$, $x_0\in \Gamma _{t_0}^{\alpha}$ and the associated  unit outer normal $n$, nor on $\ep>0$. The lemma
is proved.
\end{proof}

\begin{rem} Let us notice that Lemma \ref{lem:pointwise} shares some ideas with the so-called consistency assumption (H4) of Barles and Souganidis \cite{Bar-Sou1}. Roughly speaking, their method consists in reducing the study of the sharp interface limit to compact and smooth shapes as well as to small times, that is to consistency. In a heterogeneous and bistable context, they then proved consistency under the additional assumption that the traveling wave (which in such case is unique) depends regularly on its direction. However, such a property is far from trivial, especially in the monostable case. We therefore adopt a different approach,  relying on the uniform spreading properties proved in our earlier work \cite{A-Gil}, namely Theorem \ref{th:unif_spreading}.
\end{rem}

\subsection{Lower estimates for propagation of the layers}\label{ss:lower-estimates}

We now complete our argument by combining an iteration method and Lemma \ref{remark42}.

\begin{proof}[Proof of statement~$(i)$ of Theorem
\ref{THEO-conv}] We need to show that, for $\ep >0$ small enough, we have
$u^\ep(t,x)\geq 1 -\eta$, for all $\tau \leq t \leq T$, for all
$x$ such that $d (t,x) \leq - \beta$ (recall that $d(t,\cdot)$ denotes the signed distance function to $\Gamma _t$, negative in $\Omega _t$).

Recalling that $\Gamma _0 ^\alpha \subset \Omega _0$ and $\alpha \leq
d_{\mathcal H} (\Gamma_ 0 ^\alpha,\Gamma _0)\leq 2\alpha$ (see Proposition~\ref{regularis}), it follows from
Proposition \ref{prop:generation-below} that, for $\ep >0$ small
enough, assumption \eqref{assumption} of Lemma~\ref{lem:pointwise}
is satisfied at time $t_0 = t_\alpha \ep < \tau$. As a result
\begin{equation}\label{result}
u^\ep (t_1,x) \geq 1 - \eta, \quad t_1 := t_0 + A_1 \sqrt{ \ep},
\end{equation}
for any $x \in D$ defined as in Lemma~\ref{lem:pointwise}.
Moreover, \eqref{result}
also holds true if $x \in \Omega_{t_0}^{\alpha}$ in virtue of Lemma \ref{remark42} (notice that the needed time to
reach $1-\eta$ in Lemma \ref{remark42} is of order $\ep$), with $\Gamma _t ^\alpha$, $\Omega _t^\alpha$ playing the roles of $\tilde \Gamma _t$, $\tilde \Omega _t$. Therefore,
it follows from the claim
\begin{equation}\label{claim2}
\Omega_{t_1}^{\alpha} \subset\; D \cup
\Omega_{t_0}^{\alpha}
\end{equation}
(whose proof is postponed), that
$$
\forall x\in \Omega_{t_1}^{\alpha} \ , \quad u^\ep (t_1,x)
\geq 1 - \eta.
$$
Proceeding by induction, we conclude that for all times
$$t_k := t_0 + k A_1 \sqrt{\ep},$$
up to some $k$ such that $T < t_k < T +1$, we have
$$
u^\ep (t_k,x) \geq 1 - \eta \quad \mbox{for all} \ \  x \in
\Omega_{t_k}^{\alpha}.
$$
In particular it follows from \eqref{etoileetoile} that $u^\ep (t_k ,x) \geq 1 - \eta$ for any $x$ such that $d (t_k,x)
 \leq - \beta$.

It now only remains to consider intermediate times. Notice that, even though we stated in Lemma~\ref{remark42}
 that the solution $u^\ep$ may only expand, this is in fact only true when looking at interval of times of order
 larger than $\ep$. Therefore, the above inequality does not guarantee that $d(t,x)\leq- \beta \Rightarrow u^\ep (t,x) \geq 1 -\eta$  in
 intervals of time $[t_k,t_k+ \mathcal O(\ep)]$. To avoid this difficulty, we can nevertheless note that for all $t \in [t_k, t_{k +1})$ with $k \geq 1$,
\begin{equation}\label{presque42}
u^\ep (t,x) \geq 1 - \eta \quad \mbox{for all} \ \  x \in
\Omega_{t_{k-1}}^{\alpha}.
\end{equation}
Note that up to reducing $\ep$, we can assume that $t_1 < \tau$. Let now any $t \in [\tau, T]$, and $k \geq 1$ such
that $t \in [t_k , t_{k+1})$. Let also $x \in \Omega_t$ be such that $d(t,x) \leq -\beta$. Notice that it follows from
 Proposition~\ref{prop:motion?} that there is $C>0$ such that $d_\mathcal{H} (\Gamma_t , \Gamma_{t_{k-1}}) \leq C(t-t_{k-1})\leq 2A_1C\sqrt \ep$. Recall also that $\alpha >0$ was chosen such that
\eqref{etoileetoile} holds, so that
$d_\mathcal{H} (\Gamma_{t},\Gamma^\alpha_{t}) \leq \frac{\beta}{2}$, for all $\tau\leq t\leq T+1$.
As a result
$$
d_\mathcal{H} (\Gamma_t , \Gamma_{t^{\alpha}_{k-1}}) \leq 2A_1C\sqrt \ep+\frac \beta 2<\beta,$$
for $\ep >0$ small enough. Since $d(t,x)\leq -\beta$, this enforces $x\in \Omega^{\alpha}_{t_{k-1}}$ and, by
\eqref{presque42}, we get that $u^\ep (t,x) \geq 1 - \eta$.  This
concludes the proof of the lower estimates on the motion of the
layers of $u^\ep(t,x)$.
\end{proof}

\begin{proof}[Proof of claim \eqref{claim2}]
Recall that (see Proposition \ref{regularis}) $\Gamma^\alpha_t$ is
the zero level set of $v^\alpha(t,\cdot)$, where
\begin{equation}\label{ineg}\partial_t v^\alpha + | \nabla v^\alpha | \left( c^*
\left(\frac{\nabla v^\alpha}{ | \nabla v^\alpha |})\right) -
\alpha \right) \geq 0.
\end{equation}
To prove the claim
\eqref{claim2}, consider any $x \in \Omega_{t_1}^{\alpha}
\setminus \Omega_{t_0}^{\alpha}$, and let us prove that $x\in D$.
First, there exists some $x_0 \in \Gamma^\alpha_{t_0}$ such that
$|x - x_0| = dist (x,\Gamma^\alpha_{t_0})$ and, by convexity, such
an $x_0$ is unique. Moreover,
$$n = \frac{x-x_0}{|x-x_0|} =  \frac{ \nabla v^\alpha (t_0,x_0)}{|\nabla v^\alpha (t_0,x_0)|}$$
is, by construction, the unit outer normal of $\Gamma_{t_0}^\alpha$ at point $x_0$ (the first equality follows
from the choice of $x_0$, and the second from the definition of $\Gamma^\alpha_t$ as the zero level set of $v^\alpha (t,\cdot)$).

In order to prove that $x \in D$, it only remains to check the inequality
$$| (x - x_0) \cdot n |=| x - x_0| \leq A_1 \left(c^* (n) - \frac{\alpha}{2}\right) \sqrt{\ep}.$$
Note that, by convexity of $v^\alpha$,
$$v^\alpha (t_0, x) \geq v^\alpha (t_0,x_0) + \nabla v^\alpha (t_0,x_0) \cdot (x -x_0),$$
and also that,  thanks to the smoothness of $v^\alpha$,
$$  v^\alpha (t_1,x) - v^\alpha (t_0, x) \geq \partial_t v^\alpha (t_0 ,x) (t_1 -t_0)  - K |t_1 - t_0|^2,$$
where $K$ is a positive constant (recall that $\alpha>0$ has been
fixed). Since $x\in \Omega _{t_1}^\alpha$ we have $v^\alpha
(t_1,x)<0$ and since $x_0\in \Gamma _{t_0}^\alpha$ we have
$v^\alpha(t_0,x_0)=0$. As a result, up to increasing $K$ if
necessary,
\begin{eqnarray*}
0 &\geq& v^\alpha (t_1 ,x) - v^\alpha (t_0,x_0)\\
 & \geq &   \nabla v^\alpha (t_0,x_0) \cdot (x-x_0) + \partial_t v^\alpha (t_0 ,x) (t_1 -t_0) - K|t_1 - t_0|^2 \\
 & \geq &   \nabla v^\alpha (t_0,x_0) \cdot (x-x_0) + ( \partial_t v^\alpha (t_0 ,x_0) - K |x - x_0| )(t_1 -t_0) - K|t_1 - t_0|^2.
\end{eqnarray*}
Using \eqref{ineg}, we deduce that
$$0\geq | \nabla v^\alpha (t_0 ,x_0) | \times | x - x_0 | - | \nabla v^\alpha
(t_0,x_0) | \left( c^* (n) - \alpha \right) (t_1 - t_0)-
K[|t_1-t_0|\times |x -x_0|+ |t_1 - t_0|^2]. $$ Recalling that
$\nabla v^\alpha$ does not cancel on $\Gamma^\alpha_t$, we can
infer by compactness that
$$\rho := \inf_{0 \leq t \leq T}  \inf_{x \in \Gamma^\alpha_t} | \nabla v^\alpha (t,x) | >0.$$
Recalling also that $t_1 - t_0 = A_1 \sqrt{\ep}$, it follows from
the above that
\begin{eqnarray*}
| x -x_0 |  & \leq &  \frac{|\nabla v^\alpha (t_0,x_0)|}{|\nabla
v^\alpha (t_0,x_0)|-KA_1 \sqrt \ep}
 (c^* (n) - \alpha) A_1 \sqrt{\ep} +\frac{KA_1 ^2\ep}{\rho - K A_1 \sqrt{\ep}}\\
& \leq & \left( c^* (n) - \frac{ \alpha}{2} \right)  A_1
\sqrt{\ep},
\end{eqnarray*}
provided $\ep>0$ is small enough. As announced, $x \in D$ and the
claim is proved.
\end{proof}

\section{Control of the layers from above}\label{s:control-above}

In this section, we prove the upper estimate on the motion of
level sets of the solutions $u^\ep(t,x)$, namely statement~$(ii)$
of Theorem \ref{THEO-conv}.

To do so, we are going to  construct a family of planar
supersolutions (indexed by $y\in\Gamma _0$) for $(P^\ep)$, whose
envelop is close to the zero level sets of $v(t,\cdot)$, that is
$\Gamma _t$ in virtue of Proposition \ref{prop:motion?}. Then, for
the sake of clarity, rather than using the uniform upper spreading
speed \eqref{conclusion2}, we instead use some kind of uniform
asymptotics of the monostable minimal waves
--- which is proved in \cite{A-Gil} and actually implies
\eqref{conclusion2}.

\begin{lem}[Uniform asymptotics for critical waves, \cite{A-Gil}]\label{steep_min_wave}
Let $u^* (t,x;n) = U^* (x \cdot n - c^* (n) t , x ;n)$ be a family
of increasing in time pulsating
 traveling waves of \eqref{monostable}, with minimal speed $c^*(n)$
 in each direction $n\in \mathbb{S}^{N-1}$, shifted so that $U^* (0,0;n)= \frac{1}{2}$.

Then, the asymptotics $U^*( - \infty, x;n) = 1$, $ U^*(\infty,x;n)
= 0$ (which are uniform with respect to $x\in \R^N$) are uniform
with respect to $n \in \mathbb{S}^{N-1}$.
\end{lem}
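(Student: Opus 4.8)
The plan is to argue by contradiction using a compactness argument on the family of critical pulsating fronts, exploiting the continuity of $n \mapsto c^*(n)$ (Theorem \ref{th:continuity}). Suppose the conclusion fails: then there is $\eta_0 > 0$, sequences $z_k \to +\infty$ (or $z_k \to -\infty$), $x_k \in \R^N$, and directions $n_k \in \mathbb{S}^{N-1}$ such that, say, $U^*(z_k, x_k; n_k) \geq \eta_0$ for all $k$ (the case $z_k \to -\infty$ with $U^* \leq 1 - \eta_0$ being symmetric). By periodicity of $U^*(\cdot, \cdot; n)$ in the space variable, we may replace $x_k$ by a point in the (bounded) cell of periodicity, and up to extracting a subsequence assume $x_k \to \bar x$ and $n_k \to \bar n \in \mathbb{S}^{N-1}$.

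The main step is to pass to the limit in the sequence of fronts. Write $u^*_k(t,x) = U^*(x \cdot n_k - c^*(n_k) t, x; n_k)$, which is an entire solution of \eqref{monostable}, increasing in time, with $0 < u^*_k < 1$, normalized by $u^*_k(0,0) = \frac12$. Standard parabolic interior estimates give local bounds on $u^*_k$ in $C^{1+\alpha/2, 2+\alpha}_{loc}(\R \times \R^N)$, so that up to a further subsequence $u^*_k \to u^*_\infty$ locally uniformly (together with derivatives), where $u^*_\infty$ is an entire solution of \eqref{monostable} with $0 \leq u^*_\infty \leq 1$, nondecreasing in time, and $u^*_\infty(0,0) = \frac12$. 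Since $c^*(n_k) \to c^*(\bar n) > 0$ by Theorem \ref{th:continuity}, one checks that $u^*_\infty$ inherits the pulsating structure in direction $\bar n$ with speed $c^*(\bar n)$, i.e. $u^*_\infty(t,x) = U^*_\infty(x \cdot \bar n - c^*(\bar n) t, x)$ for some profile $U^*_\infty(z,x)$ periodic in $x$. The time-monotonicity forces the limits $U^*_\infty(\pm\infty, \cdot)$ to be periodic stationary states between $0$ and $1$, hence (by the remark following Assumption \ref{hyp:monostable}) each is $0$ or $1$; since $u^*_\infty(0,0) = \frac12$ and $u^*_\infty$ is increasing in $t$ and nonconstant, we get $U^*_\infty(-\infty,\cdot) = 1$ and $U^*_\infty(+\infty,\cdot) = 0$, so $u^*_\infty$ is a genuine pulsating front in direction $\bar n$ with the minimal speed $c^*(\bar n)$.

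The contradiction then comes from transferring the quantitative failure of the asymptotics to the limit front. Translating indices: the point $(z_k, x_k)$ corresponds, in the solution $u^*_k$, to a space-time point $(t_k, y_k)$ with $y_k$ in a bounded region (modulo the period lattice) and $y_k \cdot n_k - c^*(n_k) t_k = z_k \to +\infty$; the inequality $U^*(z_k,x_k;n_k) \geq \eta_0$ reads $u^*_k(t_k,y_k) \geq \eta_0$. After a lattice translation (using periodicity) and extraction, $(t_k,y_k)$ converges and one obtains a space-time point for $u^*_\infty$ lying arbitrarily far in the direction $+\bar n$ along the moving frame, at which $u^*_\infty \geq \eta_0 > 0$; this contradicts $U^*_\infty(+\infty,\cdot) = 0$. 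The case $z_k \to -\infty$ is handled identically against $U^*_\infty(-\infty,\cdot) = 1$.

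I expect the delicate point to be the bookkeeping in the last paragraph: making sure that the normalization $U^*(0,0;n) = \frac12$ and the periodic-lattice translations are combined correctly so that the limit object is nontrivial (neither escapes to a constant nor loses the point where the asymptotics fail), and verifying that the minimality of the speed is preserved in the limit — this uses that a pulsating front with speed $< c^*(\bar n)$ cannot exist, so $c^*(\bar n) = \lim c^*(n_k)$ is forced to be exactly the minimal speed of the limit front, which is precisely where Theorem \ref{th:continuity} is essential.
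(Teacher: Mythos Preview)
The paper does not actually prove this lemma: it is quoted from the companion paper \cite{A-Gil}, so there is no in-paper argument to compare your proposal against. That said, your compactness-and-contradiction strategy is the natural one and is essentially the approach one expects in \cite{A-Gil}.

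Two comments on the execution. First, the final paragraph has a real gap as written. If $y_k$ stays in the period cell and $y_k\cdot n_k - c^*(n_k)t_k = z_k\to+\infty$, then $t_k\to -\infty$, and no lattice translation in space alone brings $(t_k,y_k)$ back into a compact set while preserving the normalization point $(0,0)$; the pulsating relation couples the space shift to a time shift, so you cannot keep both points bounded simultaneously. The clean fix is not to chase $(t_k,y_k)$ at all: extract the limit front $u^*_\infty$ using only the normalization $u^*_k(0,0)=\tfrac12$, observe that locally uniform convergence of $u^*_k$ together with $n_k\to\bar n$, $c^*(n_k)\to c^*(\bar n)$ yields locally uniform convergence of the profiles $U^*(\cdot,\cdot;n_k)\to U^*_\infty$, then pick $Z$ with $\sup_x U^*_\infty(Z,x)<\eta_0$; for large $k$ one has $U^*(Z,x_k;n_k)<\eta_0$, and since each $U^*(\cdot,x;n)$ is decreasing in $z$ (equivalent to $\partial_t u^*>0$) and $z_k>Z$ eventually, this contradicts $U^*(z_k,x_k;n_k)\ge\eta_0$.

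Second, your closing worry about ``minimality of the speed being preserved in the limit'' is a red herring. You do not need the limit front to be the \emph{minimal-speed} front; you only need it to be a genuine pulsating front connecting $0$ to $1$, which follows once you know $u^*_\infty(0,0)=\tfrac12$, $\partial_t u^*_\infty\ge 0$, and that the only periodic steady states in $[0,1]$ are $0$ and $1$. Theorem~\ref{th:continuity} enters only to identify the limiting speed $\lim c^*(n_k)=c^*(\bar n)>0$, so that the pulsating relation passes to the limit and $u^*_\infty$ can be written as a profile $U^*_\infty(x\cdot\bar n - c^*(\bar n)t,x)$.
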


\begin{proof}[Proof of statement $(ii)$ of Theorem~\ref{THEO-conv}]  Let $0 <  T$ and a small $\beta >0$
be given. For any $n \in \mathbb{S}^{N-1}$, denote by $U^*
(z,x;n)$ a monostable pulsating front with minimal speed $c^*(n)$
in the direction $n$, shifted so that $U^* (0,0;n)=
\frac{1}{2}$.

Thanks to  $\Vert g\Vert _\infty <1$ (see Assumption
\ref{H1}) and the above lemma, we can select some $K>0$ large
enough so that
\begin{equation}\label{shift}
U^* (z,x;n) \geq \| g\|_\infty, \quad \forall z\leq -K,\, \forall
x\in \R ^N,\, \forall n \in \mathbb S ^{N-1}.
\end{equation}
Then, for any $y \in \Gamma_0$ and denoting again by $n_y$ the
outward unit normal of $\Gamma_0$ at point $y$, we define
$$
\overline{u}(t,x):=U^* \left(\frac{(x-y).n_y-c^*(n_y)t}\ep -K,\frac x \ep
; n_y\right).
$$
From equation \eqref{eq-tw} for the traveling front, we deduce
that $\overline{u}(t,x)$ solves the parabolic equation in $(P^\ep)$. We
also have $u^\ep(0,x)=g(x)\leq \overline{u}(0,x)$: indeed, for $x\notin
\Omega _0$ we have $g(x)=0$, whereas for $x\in \Omega _0$ we have
$(x-y).n_y\leq 0$ by convexity and \eqref{shift} gives the desired
ordering. The comparison principle then implies $u^\ep(t,x)\leq
\overline{u}(t,x)$. As a result
\begin{equation}\label{controle-dessus}
0 \leq u^\ep(t,x)\leq \inf _{y\in \Gamma _0} U^*
\left(\frac{(x-y).n_y-c^*(n_y)t}\ep -K ,\frac x \ep;n_y\right).
\end{equation}

We recall that $d (t,\cdot)$ denotes the signed distance to the
set $\Gamma_t$, which is chosen to be negative in $\Omega_t$ and
positive  in $\R ^N \setminus (\Gamma _t \cup {\Omega_t})$.
 Let us
now prove that there is some $\theta >0$ such that, for any $t \in
[ 0 ,T] $ and any $x$ such that $d(t,x) \geq \beta$, then
\begin{equation}\label{eq:exist_fin}
\exists y \in \Gamma_0 , \quad (x-y).n_y - c^* (n_y) t \geq \theta \beta.
\end{equation}
Assume by contradiction  that there are some sequences $(t_k)_{k \geq 1}$,
$(x_k)_{k  \geq 1}$ as above such that
$$
\forall y \in \Gamma_0, \quad (x_k - y).n_y - c^* (n_y) t_k \leq \frac{\beta}{k}.
$$
This enforces the sequence $(x_k)_{k \geq 1}$ to be bounded so that, after
extraction of some subsequences, we are equipped with some $t_\infty
\in [0,T]$, some $x_\infty$ with $d(t_\infty,x_\infty) \geq
\beta>0$, such that
$$\forall y \in \Gamma_0, \quad (x_\infty - y).n_y - c^* (n_y) t_\infty \leq 0 .$$
Thus $v(t_\infty,x_\infty)\leq 0$, which contradicts
$d(t_\infty,x_\infty)\geq \beta$.

Let us now choose any $t \in [0, T]$, any $x$ such that $d
(t,x)\geq \beta$. In view of
 \eqref{eq:exist_fin}, we can select some $y_0 \in \Gamma_0$ such that $(x-y_0).n_{y_0} - c^* (n_{y_0}) t \geq \theta \beta.$ Then, using \eqref{controle-dessus} and the monotonicity of the
pulsating traveling wave $U^* (z,x;n)$ with respect to its first
variable, we get
$$
0\leq u^\ep(t,x)\leq U^* \left(\frac {\theta \beta}{ \ep} -K,\frac x
\ep;n_{y_0}\right)\leq \sup _{n\in \mathbb S ^{N-1}} \sup _{X \in
\R ^N} U^*\left(\frac{\theta \beta}{ \ep} -K,X ;n\right).
$$
Thanks to  Lemma~\ref{steep_min_wave}, this implies $\sup _{0
\leq t \leq T} \; \sup _{\{x: d(t,x)\geq \beta\}}\;
|u^\ep(t,x)|\to 0$ as $\ep \to 0$, which concludes the proof of
Theorem~\ref{THEO-conv}.\end{proof}
\bigskip

\noindent \textbf{Acknowledgement.} M. A. was supported by the
French {\it Agence Nationale de la Recherche} within the project
IDEE (ANR-2010-0112-01). T. G. was supported by the French {\it
Agence Nationale de la Recherche} within the project NONLOCAL
(ANR-14-CE25-0013).

We are grateful to Professor Hiroshi Matano for great hospitality
in the University of Tokyo, where this work was initiated. We
would like to thank Professor Yoshikazu Giga for enlightening
discussions around Proposition \ref{regularis}. We also thank
Professors T\'erence Bayen and Lionel Thibault for taking time to
discuss some convex analysis properties.


\begin{thebibliography}{ABCD}


\bibitem{A-Duc} M.~Alfaro and A. Ducrot, {\it Sharp interface limit of the Fisher-KPP equation}, Comm. Pure Appl. Anal.
{\bf 11} (2012), no. 1, 1--18.

\bibitem{A-Duc2} M.~Alfaro and A. Ducrot, {\it Sharp
interface limit of the Fisher-KPP equation when initial data have
slow exponential decay}, Discrete Contin. Dyn. Syst. Ser. B. {\bf
16} (2011), no. 1, 15--29.

\bibitem{A-Duc3} M.~Alfaro and A. Ducrot, {\it Propagating interface in a Fisher-KPP equation with delay}, Differential
 Integral Equations {\bf 27} (2014), 81--104.


 \bibitem{A-Gil} M.~Alfaro and T. Giletti, {\it Varying the direction of propagation in
 monostable reaction-diffusion equations in periodic
media}, submitted.

\bibitem{Bardi-Evans} M. Bardi, L. C. Evans,
{\it On Hopf's formulas for solutions of Hamilton-Jacobi equations},
Nonlinear Anal.  {\bf 8}  (1984),  no. 11, 1373--1381.


\bibitem{Bar-Eva-Sou} G. Barles, L. C. Evans and P. E. Souganidis,
{\it Wavefront propagation for reaction-diffusion systems of PDE},
Duke Math. J.  {\bf 61}  (1990),  835--858.

\bibitem{Bar-Sou}  G. Barles and P. E. Souganidis, {\it A remark on the asymptotic behavior of the solution of
the KPP equation}, C. R. Acad. Sci. Paris S\'erie I {\bf 319}
(1994), 679--684.

\bibitem{Bar-Sou1} G.~Barles and P.~E.~Souganidis, {\it A new approach to
front propagation problems : theory and applications},
Arch.~Rat.~Mech.~Anal. {\bf 141} (1998), 237--296.

\bibitem{Ber-Ham-02} H.~Berestycki and F. Hamel,
{\it Front propagation in periodic excitable media}, Comm. Pure
Appl. Math. {\bf 55} (2002), no. 8, 949--1032.


\bibitem{Ber-Ham-Roq1} H. Berestycki, F. Hamel and L. Roques, {\it Analysis of the periodically fragmented environment model.
I. Species persistence}, J. Math. Biol. {\bf 51} (2005), 5--113.

\bibitem{Ber-Ham-Roq2} H. Berestycki, F. Hamel and L. Roques, {\it Analysis of the periodically fragmented environment model.
II. Biological invasions and pulsating traveling fronts}, J. Math.
Pures Appl. {\bf 84} (2005), 1101--1146.

\bibitem{Che-Gig-Got} Y. G. Chen, Y. Giga and S. Goto, {\it Uniqueness and existence of viscosity solutions of
generalized mean curvature flow equations}, J. Diff. Geom. {\bf
33} (1991), 749--786.


\bibitem{Eva-Sou} L. C. Evans and P. E. Souganidis,
{\it A PDE approach to geometric optics for certain semilinear
parabolic equations}, Indiana Univ. Math. J. {\bf 38} (1989),
141--172.


\bibitem{Fish} R. A. Fisher, {\it The wave of advance of advantageous genes},
Ann. of Eugenics {\bf 7} (1937), 355--369.

\bibitem{Frie} M. I. Freidlin, {\it Limit theorems for large deviations and reaction-diffusion equations},
Ann. Probab. {\bf 13} (1985), 639--675.

\bibitem{Frie2} M. I. Freidlin, {\it Coupled reaction-diffusion equations}, Ann. Probab.
{\bf 19} (1991), 29--57.

\bibitem{Gig} Y. Giga, {\it Surface evolution equations}, Monographs
  in Mathematics 99, Birkh\"auser Verlag, Basel, Boston, Berlin, 2006.

  \bibitem{Gig2} Y. Giga, {\it Personal communication}, The University of Tokyo, january 2015.

\bibitem{Gur-Bly-Nis}  W. S. C. Gurney, S. P. Blythe and R. M. Nisbet, {\it Nicholson's
blowflies revisited}, Nature {\bf 287} (1980), 17--21.

\bibitem{Ham} F. Hamel, {\it Qualitative properties of monostable pulsating fronts:
exponential decay and monotonicity}, J. Math. Pures Appl. {\bf 89}
(2008), 355--399.

\bibitem{Ham-Roq} F. Hamel and L. Roques, {\it Uniqueness and stability properties of monostable pulsating
fronts}, J. Eur. Math. Soc. {\bf 13} (2011), 345--390.

\bibitem{Hir-Lem} J.-B. Hiriart-Urruty and C. Lemar{\'e}chal, {\it Convex analysis and minimization algorithms. {I}},
Grundlehren der Mathematischen Wissenschaften [Fundamental
              Principles of Mathematical Sciences], 305. Springer-Verlag, Berlin,
              1993.

\bibitem{Hopf} E. Hopf, {\it Generalized solutions of non-linear equations of first order},
J. Math. Mech. {\bf 14} (1965), 951-973.

\bibitem{Kol-Pet-Pis} A. N. Kolmogorov, I. G. Petrovsky and N. S. Piskunov, {\it Etude de
l'\'equation de la diffusion avec croissance de la quantit\'e de
mati\`ere et son application \`a un probl\`eme biologique},
Bulletin Universit\'e d'Etat  Moscou, Bjul. Moskowskogo Gos.
Univ., 1937, 1--26.

\bibitem{Lio-Pap-Var} P. L. Lions, G. Papanicolaou and S. R. S.
Varadhan, {\it Homogenization of Hamilton-Jacobi equations},
unpublished, 1986.

\bibitem{Lio-Sou} P. L. Lions and P. E. Souganidis, {\it Homogenization of \lq\lq viscous'' Hamilton-Jacobi equations in stationary
 ergodic media}, Comm. Partial Differential Equations {\bf 30} (2005), 335--375.

\bibitem{Maj-Sou} A. J. Majda and P. E. Souganidis, {\it Large-scale front
dynamics for turbulent reaction-diffusion equations with separated
velocity scales}, Nonlinearity {\bf 7} (1994),  1--30.

\bibitem{Nad-09} G. Nadin, {\it Traveling fronts in space-time periodic media}, J. Math. Pures
Appl. (9) {\bf 92} (2009), no. 3, 232--262.

\bibitem{Nic} A. J. Nicholson, {\it An outline of the dynamics of animal populations},
Austral. J. Zool. {\bf 2} (1954), 9--65.


\bibitem{Shi-Kaw} N. Shigesada and K. Kawasaki, {\it Biological
Invasion: Theory and Practice}, Oxford University Press, 1997.

\bibitem{Sou} P. E. Souganidis, {\it Front propagation: theory and applications. Viscosity solutions and
applications},  (Montecatini Terme, 1995), 186--242, Lecture Notes
in Math., 1660, Springer, Berlin, 1997.

\bibitem{Wein02} H.~Weinberger, {\it On spreading speed and travelling waves for growth and migration},
J. Math. Biol. {\bf 45} (2002), 511--548.


\bibitem{Xin} J. Xin, {\it Existence of planar flame fronts in
convective-diffusive periodic media}, Arch. Ration. Mech. Anal.
{\bf 121} (1992), 205--233.


\end{thebibliography}
\end{document}